\theoremstyle{plain} 
\newtheorem{theorem}{Theorem}[section]
\newtheorem{lemma}[theorem]{Lemma}
\newtheorem{remark}[theorem]{Remark}
\newtheorem{example}[theorem]{Example}
\newtheorem{corollary}[theorem]{Corollary}
\newtheorem{conjecture}[theorem]{Conjecture}
\newtheorem{definition}[theorem]{Definition}
\newtheorem{proposition}[theorem]{Proposition}
\newtheorem{question}[theorem]{Question}
\newcommand{\makeinvisible}[1]{}
\newcommand{\eop}{\ \hfill $\Box$}
\numberwithin{equation}{section}
\newcommand{\cc}{{\mathbb C}}
\newcommand{\pp}{{\mathbb P}}
\newcommand{\rr}{{\mathbb R}}
\newcommand{\ggg}{{\mathbb G}}
\newcommand{\hhh}{{\mathbb H}}
\newcommand{\aaa}{{\mathbb A}}
\newcommand{\Gm}{\ggg _m}
\newcommand{\Cc}{{\mathcal C}}
\newcommand{\Dd}{{\mathcal D}}
\newcommand{\Oo}{{\mathcal O}}
\newcommand{\Uu}{{\mathcal U}}
\newcommand{\Mm}{{\mathcal M}}
\newcommand{\rk}{{\rm rk}}
\begin{document}

\author[C. Simpson]{Carlos Simpson}
\address{CNRS, Laboratoire J. A. Dieudonn\'e, UMR 6621
\\ Universit\'e de Nice-Sophia Antipolis\\
06108 Nice, Cedex 2, France}
\email{carlos@unice.fr}
\urladdr{http://math.unice.fr/$\sim$carlos/} 
\thanks{This research is partially supported 
by ANR grants BLAN08-1-309225 (SEDIGA) and BLAN08-3-352054 (G-FIB)}

\title[Destabilizing modifications]{Iterated destabilizing modifications for vector bundles with connection}

\subjclass[2000]{Primary 14H60; Secondary 14D07, 32G34}

\keywords{Connection, Deformation, Higgs bundle, Moduli space, Oper,  Reductive group}

\begin{abstract}
Given a vector bundle with integrable
connection $(V,\nabla )$ on a curve, if $V$ is not itself semistable as
a vector bundle then we can iterate a construction involving modification by
the destabilizing subobject to obtain a Hodge-like filtration $F^p$
which satisfies Griffiths transversality. The associated graded Higgs bundle
is the limit of $(V,t\nabla )$ under the de Rham to Dolbeault degeneration.
We get a stratification of the moduli space of connections, with as 
minimal stratum the space of opers. The strata have fibrations
whose fibers are Lagrangian subspaces of the moduli space. 
\end{abstract}

\maketitle

\section{Introduction} \label{sec-introduction}

Suppose $X$ is a smooth projective curve over $\cc$. Starting with a rank $r$ vector bundle with
integrable holomorphic connection $(V,\nabla )$,
if $V$ is semistable as a vector bundle, we get a point in the moduli space $\Uu (X,r)$ of semistable 
vector bundles of rank $r$ and degree $0$ on $X$. 

Let $M_{\rm DR}(X,r)$ denote the moduli space of vector bundles with integrable connection of rank $r$. 
The open subset $G_0$ where the underlying vector bundle is itself semistable
thus has a fibration 
$$
M_{\rm DR}(X,r)\supset G_0\rightarrow \Uu (X,r).
$$
The fiber over a point $[V] \in \Uu (X,r)$ (say a stable bundle) is the space of connections
on $V$, hence it is a principal homogeneous space on 
$H^0(End(V)\otimes \Omega ^1_X)\cong H^1(End (V))^{\ast} = T^{\ast}_V\Uu (X,r)$.
So, the above fibration is a twisted form of the cotangent bundle $T^{\ast}_V\Uu (X,r)\rightarrow \Uu (X,r)$.
At points where the bundle $V$ is not semistable, we will extend
$G_0$ to a stratification of $M_{\rm DR}(X,r)$ by locally closed subsets $G_{\alpha}$.

If  $V$ is not semistable, let
$H\subset V$ be the maximal destabilizing subsheaf. Recall that $H$ is a subsheaf whose
slope $\mu (H)=deg(H)/rk(H)$ is maximal, and among such subsheaves $H$ has maximal rank. It is unique, and is a strict subbundle
so the quotient $V/H$ is also a bundle.
 
The connection induces an algebraic map 
$$
\theta = \nabla : H \rightarrow (V/H) \otimes \Omega ^1_X.
$$
Define a Higgs bundle $(E,\theta )$ by setting $E^1:= H, E^0:= V/H$, $E:= E^0\oplus E^1$, and
$\theta$ is the above map. It is a ``system of Hodge bundles'', that is a Higgs bundle fixed by the
$\cc ^{\ast}$ action. 

If $(E,\theta )$ is a semistable Higgs bundle, the process stops. If not, we can continue by again choosing 
$H\subset (E,\theta )$ the maximal destabilizing sub-Higgs-bundle,  then using $H$ to further modify the filtration
according to the formula \eqref{modif} below. The proof of our first main Theorem \ref{convergence} consists in showing 
that this recursive process stops at a Griffiths-transverse filtration of
$(V,\nabla )$ such that the associated graded Higgs bundle is semistable. 

Classically filtered objects $(V,\nabla , F^{\cdot})$ arose from variations of Hodge structure. In case an irreducible connection
supports a VHS, our iterative procedure constructs the Hodge filtration $F^{\cdot}$ starting from
$(V,\nabla )$. 

In terms of the nonabelian Hodge filtration \cite{hfnac} $M_{\rm Hod}\rightarrow \aaa ^1$ the above process gives a way of calculating
the  limit point $\lim _{t\rightarrow 0}(t\lambda , V,t\nabla )$; the limit is a point in one of the connected components of
the fixed point set of the $\Gm$ action on the moduli space of Higgs bundles $M_{\rm H}$. Looking at where the limit lands gives
the stratification by $G_{\alpha}\subset M_{\rm DR}$. Existence of the limit is a generalization to $M_{\rm DR}$ of
properness of the Hitchin map for $M_{\rm H}$.
The interpretation in terms of Griffiths-transverse  filtrations was pointed out briefly in \cite{hfnac}. 

In the present paper, after describing the explicit and geometric construction of the limit point by iterating the destabilizing modification
construction, we consider various aspects of the resulting stratification. 
For example, we conjecture that the stratification is nested, i.e. smaller strata are contained in the closures of bigger ones.
This can be shown for bundles of rank $2$. 
A calculation in deformation theory shows that the set 
$L_q\subset M_{\rm DR}$ of points $(V,\nabla )$ such that $\lim _{t\rightarrow 0}(V,t\nabla ) = q$, is a lagrangian subspace for the natural symplectic form. 
We conjecture that these subspaces are closed and form a nice foliation \ref{foliation}. 
We mention in \S \ref{sec-parabolic} 
that the same theory works for the parabolic or orbifold cases, and point out a new phenomenon there: the biggest open {\em generic stratum}
no longer necessarily corresponds to unitary bundles. The Hodge type of the generic stratum varies with the choice of parabolic weights, with constancy
over polytopes. At the end of the paper we do some theoretical work (which was  missing from \cite{hfnac}) necessary for proving the existence
of limit points in the case of principal bundles. All along the way, we try to identify natural questions for further study.

\medskip

It is a great pleasure to dedicate this paper to Professor Ramanan. 
I would like to thank him for the numerous conversations we have had over the years,
starting from my time as a graduate student, in which he explained his insightful points of view on everything
connected to vector bundles. These ideas are infused throughout the paper. 

I would also like to thank Daniel Bertrand, Philip Boalch, David Dumas, Jaya Iyer, Ludmil Katzarkov, Bruno Klingler, Vladimir Kostov, 
Anatoly Libgober, Ania Otwinowska, Tony Pantev,
Claude Sabbah, and Szilard Szabo for interesting communications related
to the subjects of this paper.

\section{Griffiths transverse filtrations}
\label{gtfilt}
Suppose $X$ is  a smooth projective curve, and $V$ is a vector bundle with integrable 
holomorphic connection $\nabla : V\rightarrow V\otimes _{\Oo _X}\Omega ^1_X$. 
A {\em Griffiths transverse filtration} is a decreasing filtration of $V$ by strict subbundles 
$$
V = F^0 \supset F^1 \supset F^2 \cdots \supset F^k = 0
$$
which satisfies the {\em Griffiths transversality condition} 
$$
\nabla : F^p\rightarrow F^{p-1}\otimes _{\Oo  _X}\Omega ^1_X.
$$
In this case put $E := Gr_F(V):= \bigoplus _p E^p$ with $E^p:= F^p/F^{p+1}$. Define the $\Oo _X$-linear map
$$
\theta : E^p\rightarrow E^{p-1}\otimes _{\Oo _X}\Omega ^1_X
$$
using $\nabla$. Precisely, if $e$ is a section of $E^p$, lift it to a section $f$ of $F^p$ and 
note that by the transversality condition, $\nabla f$ is a section of 
$F^{p-1}\otimes _{\Oo _X}\Omega ^1_X$. Define $\theta (e)$
to be the projection of $\nabla (f)$ into $E^{p-1}\otimes _{\Oo _X}\Omega ^1_X$. 
Again by the transversality condition, $\theta (e)$ is independent of the choice of lifting $f$. If $a$ is a section of $\Oo _X$
then $\nabla (af) = a\nabla (f) + f\otimes da$ but the second term projects to zero in $E^{p-1}\otimes _{\Oo _X}\Omega ^1_X$,
so $\theta (a e) = a\theta (e)$, that is $\theta$ is $\Oo_X$-linear. 

We call $(E,\theta )$ the {\em associated-graded Higgs bundle} corresponding to $(V, \nabla , F^{\cdot})$. 

Griffiths-transverse filtrations are the first main piece of structure of variations of Hodge structure, and in that context the
map $\theta$ is known as the ``Kodaira-Spencer map''. 
This kind of filtration of a bundle with connection was generalized to the notion of ``good filtration'' for $\Dd$-modules,
and has appeared in many places. 

A complex variation of Hodge structure consists of a $(V,\nabla , F^{\cdot})$
such that furthermore there exists a $\nabla$-flat hermitian complex form which is nondegenerate on each piece of the filtration,
and with a certain alternating positivity property (if we split the filtration by an orthogonal decomposition, then the form should have
sign $(-1)^p$ on the piece splitting $F^p/F^{p+1}$). For a VHS, the associated-graded Higgs bundle
$(E,\theta )$ is semistable. 

The historical variation of Hodge structure picture is motivation for considering the filtrations and Kodaira-Spencer
maps, however we don't use the polarization which is not a complex holomorphic object. Instead, we concentrate on the semistability condition. 

\begin{definition}
\label{grsemistable}
We say that $(V, \nabla , F^{\cdot})$ is {\em gr-semistable} if the associated-graded Higgs bundle $(E,\theta )$ is semistable
as a Higgs bundle.
\end{definition}

The Higgs bundle $(E,\theta )$ is a fixed point of the $\cc ^{\ast}$ action, which is equivalent to saying that we have a 
structure of {\em system of Hodge bundles}, i.e. a decomposition
$E=\bigoplus _pE^p$ with $\theta : E^p\rightarrow E^{p-1}\otimes _{\Oo _X}\Omega ^1_X$. 

\begin{remark}
If $(E,\theta )$ is a system of Hodge bundles, then it is semistable as a Higgs bundle if and only if it is semistable as a system 
of Hodge bundles. In particular, if it is not a semistable Higgs bundle then the maximal destabilizing subobject $H\subset E$
is itself a system of Hodge bundles, that is $H=\bigoplus H^p$ with $H^p:= H\cap E^p$. 
\end{remark}

Indeed, if $(E,\theta )$ is not semistable, it is easy to see by uniqueness of the maximal destabilizing subsheaf that
$H$ must be preserved by the $\cc ^{\ast}$ action. 

These objects have appeared in geometric Langlands theory under the name ``opers'': 

\begin{example}
An {\em oper} is a vector bundle with integrable connection and Griffiths transverse filtration $(V,\nabla , F^{\cdot})$ 
such that $F^{\cdot}$ is a full flag, i.e. $E^p=Gr^p_F(V)$ are line bundles, and 
$$
\theta : E^p\stackrel{\cong}{\rightarrow} E^{p-1}\otimes _{\Oo _X}\Omega ^1_X
$$
are isomorphisms. 
\end{example}

If $g\geq 1$ then an oper is gr-semistable. This motivates the following definition and terminology. 

\begin{definition}
A {\em partial oper} is a vector bundle with integrable connection and Griffiths-transverse filtration $(V,\nabla , F^{\cdot})$
which is gr-semistable. 
\end{definition}

Every integrable connection supports at least one partial oper structure.

\begin{theorem}
\label{convergence}
Suppose $(V,\nabla )$ is a vector bundle with integrable connection on a smooth projective curve $X$. Then there exists
a gr-semistable Griffiths-transverse filtration giving a partial oper structure $(V,\nabla , F^{\cdot})$.
\end{theorem}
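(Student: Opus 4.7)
The plan is to construct the Griffiths-transverse filtration by an iterative procedure: at each stage where the current associated graded is not yet Higgs-semistable, I would use its maximal destabilizing sub-Higgs-bundle to refine the filtration. If $V$ is already semistable as a vector bundle, the trivial filtration $F^0 = V \supset F^1 = 0$ suffices, since its associated graded Higgs bundle is $(V,0)$. Otherwise I take as initial data the two-step filtration $F^0 = V \supset F^1 = H \supset F^2 = 0$, where $H \subset V$ is the Harder--Narasimhan maximal destabilizing subsheaf; any two-step filtration is automatically Griffiths transverse, and its associated graded is the system of Hodge bundles $E^1 = H$, $E^0 = V/H$ from the introduction.

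At the inductive step, suppose $(V,\nabla,F^\bullet)$ is Griffiths transverse and gr-unstable, with associated graded $(E,\theta)$. By the Remark following Definition~\ref{grsemistable}, the maximal destabilizing Higgs subbundle $H\subset E$ is a sub-system of Hodge bundles, $H = \bigoplus_p H^p$ with $\theta(H^p) \subset H^{p-1}\otimes \Omega^1_X$. Let $\tilde H^p \subset F^p$ be the preimage of $H^p$ under $F^p \twoheadrightarrow E^p$, so $F^{p+1} \subset \tilde H^p \subset F^p$, and define the new filtration by formula \eqref{modif},
$$
{F'}^p := \tilde H^{p-1}.
$$
This is the operation of shifting $H$ up one step in the Hodge grading while leaving $E/H$ in place. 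One has $F^p \subset {F'}^p \subset F^{p-1}$, and Griffiths transversality for ${F'}^\bullet$ holds: for $f\in \tilde H^{p-1}$, the old transversality puts $\nabla f \in F^{p-2}\otimes \Omega^1_X$, and its image in $E^{p-2}\otimes \Omega^1_X$ is $\theta(\overline f) \in H^{p-2}\otimes \Omega^1_X$ by $\theta$-stability of $H$, so $\nabla f \in \tilde H^{p-2}\otimes \Omega^1_X = {F'}^{p-1}\otimes \Omega^1_X$.

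The main obstacle is termination. My plan is to monitor the numerical invariant
$$
\tilde J(F^\bullet) := \sum_p \bigl(\deg F^p - \mu(V)\cdot \rk F^p\bigr).
$$
Using $\deg \tilde H^{p-1} = \deg F^p + \deg H^{p-1}$ and $\rk \tilde H^{p-1} = \rk F^p + \rk H^{p-1}$, summation gives
$$
\tilde J({F'}^\bullet) - \tilde J(F^\bullet) = \rk(H)\bigl(\mu(H) - \mu(V)\bigr) > 0,
$$
the strict positivity coming from the destabilizing inequality $\mu(H) > \mu(E) = \mu(V)$. Slopes of subobjects of rank at most $r$ lie in $\frac{1}{r!}\mathbb{Z}$, so the increments are bounded below by a positive constant depending only on $r$. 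For the matching upper bound, each $F^p \subset V$ has $\deg F^p \leq \mu_{\max}(V)\cdot \rk F^p$, and after discarding trivial steps the filtration has length at most $r = \rk(V)$, so $\tilde J(F^\bullet)$ admits an a priori upper bound depending only on $V$. Strict monotonicity with uniformly bounded gaps, combined with the upper bound, forces the iteration to stop after finitely many steps, at which point $(V,\nabla,F^\bullet)$ is the desired partial oper structure.
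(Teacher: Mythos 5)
Your construction is exactly the paper's: the modified filtration ${F'}^p=\tilde H^{p-1}$ is formula \eqref{gdef} (your citation of \eqref{modif} is a slight mislabel --- that equation is the resulting exact sequence of graded objects), and your transversality check and use of the Remark after Definition \ref{grsemistable} are the same. Where you genuinely differ is the termination argument. The paper first assumes $(V,\nabla )$ irreducible, shows the triple $(\beta ,\rho ,\gamma )$ --- slope and rank of the maximal destabilizer plus a ``center of gravity'' defect --- drops strictly in lexicographic order (Lemma \ref{decreasing}) and takes only finitely many values (Lemmas \ref{nogap}, \ref{finite}), and then treats general $(V,\nabla )$ by gluing filtrations on the Jordan--H\"older subquotients. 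You instead monitor a single additive invariant $\tilde J$ which increases by $\deg H-\mu (V)\rk H$ at each step and is bounded above; since $\deg V=0$ (a bundle with holomorphic connection on a curve), $\mu (V)=0$ and the increment is the positive integer $\deg H\geq 1$, so you don't even need the $\frac{1}{r!}\zz$ remark. This bypasses the irreducibility reduction entirely, which is a real simplification if the upper bound is secured.

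That upper bound is the one step you must tighten: you cannot simply ``discard trivial steps,'' because the sum defining $\tilde J$ runs over all indices, and for reducible $(V,\nabla )$ the iteration can produce repeated terms $F^p=F^{p+1}\neq 0,V$ --- this is exactly the gap phenomenon of Lemma \ref{nogap}, which the paper excludes only under irreducibility, and your argument as written never rules it out or accounts for its contribution to $\tilde J$. The fix is short: if $F^p=F^{p+1}=W$ then Griffiths transversality gives $\nabla W\subset W\otimes \Omega ^1_X$, so $W$ carries a connection and $\deg W=0$; since $\mu (V)=0$, every repetition contributes $0$ to $\tilde J$. The distinct proper nonzero terms form a strictly decreasing chain, so there are at most $r-1$ of them, each contributing at most $r\,\mu _{\max}(V)$, whence $\tilde J\leq r(r-1)\mu _{\max}(V)$ throughout the iteration and termination follows as you say. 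With that one observation added, your proof is complete and arguably cleaner than the paper's, at the cost of being special to the additive invariant; the paper's lexicographic argument is what generalizes to its discussion of which invariants control the process.
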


The proof will be given in \S \ref{constr} below. 

In general the filtration $F^{\cdot}$ in the previous theorem, is not unique, see Proposition \ref{filtunique}.
The associated-graded Higgs bundle
$(E=Gr^{\cdot}_F(V), \theta )$ is unique up to $S$-equivalence, as
follows from the interpretation in terms of the nonabelian Hodge filtration, \S \ref{nahf}.

Given the $S$-equivalence class, it makes sense to say whether $(E,\theta )$ is stable or not. 
Proposition \ref{filtunique} shows that the filtration $F^{\cdot}$ is unique up to shifting indices, if and only if
$(E,\theta )$ is stable.

In \S \ref{sec-parabolic}, the nonuniqueness of the filtration is related to a wall-crossing phenomenon in the parabolic case.

\section{Construction of a gr-semistable filtration}
\label{constr}

If $(E=\bigoplus E^p,\theta )$ is a system of Hodge bundles, for any $k$ let $E^{[k]}$ denote the system of Hodge bundles
with Hodge index shifted, so that 
$$
(E^{[k]})^p:= E^{p-k}.
$$

Let $(V,\nabla )$ be fixed. Suppose we are given a Griffiths-transverse filtration $F^{\cdot}$ such that $(Gr_F(V),\theta )$ is 
not a semistable Higgs bundle. Choose $H$ to be the maximal destabilizing subobject, which is a sub-system of Hodge bundles 
of $Gr_F(V)$. Thus 
$$
H= \bigoplus H^p, \;\;\; H^p\subset Gr^p_F(V)= F^pV/F^{p+1}V.
$$
Note that the $H^p$ are strict subbundles here. We can consider
$$
H^{p-1}\subset V/F^pV
$$
which is again a strict subbundle. 

Define a new filtration $G^{\cdot}$ of $V$ by 
\begin{equation}
\label{gdef}
G^p:= \ker \left( V \rightarrow \frac{V/F^pV}{H^{p-1}} \right) .
\end{equation}
The condition $\theta (H^p)\subset H^{p-1}\otimes _{\Oo _X}\Omega ^1_X$ means that the new filtration $G^{\cdot}$ is again
Griffiths-transverse. We have exact sequences
$$
0\rightarrow Gr^p_F(V)/H^p \rightarrow Gr^p_G(V)\rightarrow H^{p-1}\rightarrow 0
$$
which, added all together, can be written as an exact sequence of systems of Hodge bundles
\begin{equation}
\label{modif}
0\rightarrow Gr_F(V)/H \rightarrow Gr_G(V)\rightarrow H^{[1]}\rightarrow 0 .
\end{equation}

We would like to show that the process of starting with a filtration $F^{\cdot}$ and replacing it with the modified filtration $G^{\cdot}$
stops after a finite number of steps, at a gr-semistable filtration. As long as the result is still not gr-semistable, we can
choose a maximal destabilizing subobject and continue. To show that the process stops, we will define a collection of invariants which decrease in
lexicographic order. 

For a system of Hodge bundles $E$, let $\beta (E)$ denote the slope of the maximal destabilizing subobject. Let $\rho (E)$ denote
the rank of the maximal destabilizing subobject. Define the {\em center of gravity}  to be 
$$
\zeta (E):= \frac {\sum  \rk (E^p)\cdot p}{\rk (E)}.
$$
This measures the average location of the Hodge indexing. In particular, suppose $U=E^{[k]}$. Then $U^p= E^{p-k}$ so
$$
\zeta (U) = \frac {\sum p \rk (U^p)}{\rk (U)} = \frac {\sum p \rk (E^{p-k})}{\rk (E)} = \frac {\sum (p+k) \rk (E^p)}{\rk (E)} = \zeta (E)+k.
$$
This gives the formula
\begin{equation}
\label{zetaform}
\zeta (E^{[k]}) = \zeta (E) + k.
\end{equation}
Now for any non-semistable system of Hodge bundles $E$, let $H$ denote the maximal destabilizing subobject and put
$$
\gamma (E) := \zeta (E/H)-\zeta (H).
$$
This normalizes things so that $\gamma (E^{[k]}) = \gamma (E)$. 

Denote $\beta _F:= \beta (Gr_F(V),\theta )$, $\rho _F:= \rho (Gr_F(V),\theta )$, and $\gamma _F:= \gamma (Gr_F(V),\theta )$.

\begin{lemma}
\label{decreasing}
In the process $F^{\cdot}\mapsto G^{\cdot}$ described above, and assuming that $G^{\cdot}$ is also not gr-semistable,
then the triple of invariants $(\beta , \rho , \gamma )$ decreases strictly in the lexicographic ordering. 
In other words, $(\beta _G, \rho _G, \gamma _G)$ is strictly smaller than $(\beta _F, \rho _F, \gamma _F)$. 
\end{lemma}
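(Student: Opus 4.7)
The plan is to analyze sub-systems of Hodge bundles $K \subset Gr_G(V)$ through the extension (\ref{modif}). For any such $K$, set $K_1 := K \cap (Gr_F(V)/H)$ and let $K_2$ denote the image of $K$ in $H^{[1]}$; then $0 \to K_1 \to K \to K_2 \to 0$ is exact, with $K_1 \subset Gr_F(V)/H$ and $K_2 \subset H^{[1]}$ both sub-systems of Hodge bundles. The three invariants of $Gr_G(V)$ will be controlled by seeing how its maximal destabilizer distributes between these two pieces.

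The first step is to bound $\beta_G \leq \beta_F$. The key sub-claim is that every nonzero sub-system of Hodge bundles $A \subset Gr_F(V)/H$ satisfies $\mu(A) < \beta_F$ strictly. Indeed, the preimage $\tilde A \subset Gr_F(V)$ strictly contains $H$, so by the uniqueness and rank-maximality of $H$ as the destabilizer of $Gr_F(V)$, one must have $\mu(\tilde A) < \beta_F$; a short rank-degree computation then yields $\mu(A) < \beta_F$. Since $H^{[1]}$ is semistable of slope $\beta_F$, every $K_2 \subset H^{[1]}$ obeys $\mu(K_2) \leq \beta_F$. Writing $\mu(K)$ as the weighted average of $\mu(K_1)$ and $\mu(K_2)$ gives $\mu(K) \leq \beta_F$, with equality forcing $K_1 = 0$.

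If $\beta_G < \beta_F$ we are done. Otherwise $\beta_G = \beta_F$, and the maximal destabilizer $K$ of $Gr_G(V)$ must have $K_1 = 0$, hence injects into $H^{[1]}$, giving $\rho_G = \rk K \leq \rk H^{[1]} = \rho_F$. If $\rho_G < \rho_F$ we are done. In the remaining case $\rho_G = \rho_F$, the injection $K \hookrightarrow H^{[1]}$ is a map of torsion-free sheaves on a curve of equal rank and equal degree (both equal $\beta_F \rho_F$), hence an isomorphism. Therefore $K$ splits the extension (\ref{modif}), giving $Gr_G(V) \cong (Gr_F(V)/H) \oplus H^{[1]}$ and $Gr_G(V)/K \cong Gr_F(V)/H$. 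Applying (\ref{zetaform}), $\zeta(K) = \zeta(H^{[1]}) = \zeta(H) + 1$ while $\zeta(Gr_G(V)/K) = \zeta(Gr_F(V)/H)$, so $\gamma_G = \gamma_F - 1 < \gamma_F$.

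The main delicate point is the equality case $\rho_G = \rho_F$: one must upgrade the injection $K \hookrightarrow H^{[1]}$ to an isomorphism in order to identify $Gr_G(V)/K$ with $Gr_F(V)/H$ on the nose and extract the exact identity $\gamma_G = \gamma_F - 1$. This works cleanly on a smooth curve because an inclusion of torsion-free sheaves of equal rank and equal degree is automatically an isomorphism. The other step that requires care is the strict inequality $\mu(A) < \beta_F$ for subs of $Gr_F(V)/H$; this is precisely where the full maximality of $H$ -- rank-maximal among slope-maximal subs, not merely semistability of a destabilizing sub -- is essential.
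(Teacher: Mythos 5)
Your proof is correct and follows exactly the route the paper intends: the paper's own proof is just the one-line instruction to use the exact sequence \eqref{modif} together with the formula \eqref{zetaform}, and your argument is the careful fleshing-out of that, including the strict inequality for subobjects of $Gr_F(V)/H$ coming from rank-maximality of $H$ and the equal-rank, equal-degree injection $K\hookrightarrow H^{[1]}$ being an isomorphism on a curve, which yields $\gamma_G=\gamma_F-1$ in the final case.
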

\begin{proof}
Use the exact sequence \eqref{modif} and the formula \eqref{zetaform}. 
\end{proof}

In order to show that the $\gamma (E)$ remain bounded, observe the following.

\begin{lemma}
\label{nogap}
If $(V,\nabla )$ is an irreducible connection, and $F^{\cdot}$ is a Griffiths-transverse filtration,  then there are no gaps in
the $E^p=Gr^p_F(V)$, that is there are no $p' < p < p''$ such that $E^p=0$ but $E^{p'}\neq 0$ and $E^{p''}\neq 0$.
\end{lemma}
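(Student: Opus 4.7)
The plan is straightforward: exploit the Griffiths transversality condition to show that a gap in the Hodge indexing forces one of the steps of the filtration to be $\nabla$-invariant, and then invoke irreducibility.

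First I would suppose for contradiction that there is an index $p$ with $E^p = 0$ but with nonzero pieces on both sides, i.e.\ $E^{p'} \neq 0$ for some $p' < p$ and $E^{p''} \neq 0$ for some $p'' > p$. The vanishing $E^p = F^p/F^{p+1} = 0$ means exactly $F^p = F^{p+1}$.

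Next I would apply Griffiths transversality to the step $F^{p+1}$: by definition,
$$
\nabla(F^{p+1}) \subset F^p \otimes_{\Oo_X} \Omega^1_X = F^{p+1}\otimes_{\Oo_X}\Omega^1_X,
$$
where the equality uses $F^p = F^{p+1}$. Thus $F^{p+1}$ is a $\nabla$-invariant subbundle of $V$.

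The final step uses irreducibility of $(V,\nabla)$: the only $\nabla$-invariant subbundles are $0$ and $V$. If $F^{p+1} = 0$, then $E^{p''} = 0$ for all $p''\geq p+1$, contradicting the existence of $p'' > p$ with $E^{p''}\neq 0$. If $F^{p+1} = V$, then since the filtration is decreasing we also have $F^{p'} = V$ for all $p'\leq p+1$, so $E^{p'} = 0$ for all $p' \leq p$, contradicting the existence of $p' < p$ with $E^{p'}\neq 0$. In either case we reach a contradiction, so no such gap exists.

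There is no real obstacle here; the only thing to notice is that one needs to take $F^{p+1}$ (not $F^p$) as the $\nabla$-invariant subbundle, so that Griffiths transversality together with $F^p = F^{p+1}$ gives a true self-containment. Irreducibility is used in its strongest form (no nonzero proper $\nabla$-stable subbundle), which is the standard notion for integrable connections on a curve.
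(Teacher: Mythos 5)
Your proof is correct and follows exactly the paper's own argument: a gap gives $F^p=F^{p+1}$, Griffiths transversality then makes this step a $\nabla$-invariant subbundle, and the nonvanishing pieces on either side rule out its being $0$ or $V$, contradicting irreducibility. You have simply spelled out the nontriviality check that the paper leaves implicit.
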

\begin{proof}
If there were such a gap, then by Griffiths transversality the piece $F^p=F^{p+1}$ would be a nontrivial subbundle preserved by
the connection, contradicting irreducibility of $(V,\nabla )$. 
\end{proof}

\begin{lemma}
\label{finite}
Suppose $(V,\nabla )$ is an irreducible connection. 
For all $E= Gr_ F(V)$ coming from  non-gr-semistable Griffiths-transverse filtrations of our fixed $(V,\nabla )$, 
each of the invariants $\beta (E)$, $\rho (E)$ and $\gamma (E)$ can take on only finitely many values.
\end{lemma}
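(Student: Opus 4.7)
The plan is to show that each of the three invariants takes values in a finite set by establishing that each is a rational number with denominator bounded in terms of $r$ alone, lying in a bounded range. I handle them in increasing order of difficulty.

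First, $\rho(E) = \rk(H)$ is an integer in $\{1, 2, \ldots, r-1\}$ since $H$ is a proper nonzero subobject of $E$, giving finitely many values immediately.

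For $\gamma(E)$, Lemma \ref{nogap} tells us that the set of Hodge indices $p$ with $E^p \neq 0$ forms a consecutive block of at most $r$ integers. The shift-invariance \eqref{zetaform} lets me normalize this block to $[0, k-1]$ with $k \leq r$ without changing $\gamma$. Then $\zeta(H)$ and $\zeta(E/H)$ are weighted averages of integers in $[0, r-1]$, so both are rational numbers in that interval with denominators at most $r-1$. Consequently $\gamma(E) = \zeta(E/H) - \zeta(H)$ lies in $[-(r-1), r-1]$ with denominator at most $(r-1)^2$, yielding finitely many values.

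The main obstacle is $\beta(E) = \mu(H) = \deg(H)/\rk(H)$. The denominator divides $\rk(H) \leq r-1$, and the lower bound $\mu(H) > \mu(E) = \mu(V)$ is immediate from the destabilizing property. Obtaining an upper bound is the technical crux. The cleanest route is via the intended application to Theorem \ref{convergence}: by Lemma \ref{decreasing}, $\beta$ is non-increasing along the iterative sequence of filtrations produced by the algorithm, so it stays bounded above by its value at the initial step, giving finitely many rational values in the bounded range $(\mu(V), \beta_0]$. A more intrinsic approach is to lift each $H^p \subset E^p = F^p/F^{p+1}$ to the subbundle $\pi_p^{-1}(H^p) \subset F^p \subset V$, whose degree is bounded above by $\rk(\pi_p^{-1}(H^p))\cdot\mu_{\max}(V)$, reducing the bound on $\deg(H^p) = \deg(\pi_p^{-1}(H^p)) - \deg(F^{p+1})$ to a lower bound on $\deg(F^{p+1})$, which must be extracted from the combined Griffiths-transversality constraints together with the irreducibility of $\nabla$. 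This step, establishing uniform bounds on degrees of pieces of Griffiths-transverse filtrations, is the part I expect to dominate the work.
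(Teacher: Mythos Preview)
Your handling of $\rho(E)$ and $\gamma(E)$ is correct and is essentially what the paper does: the paper just says ``for the \ldots\ rank this is clear'' and ``for $\gamma(E)$ it follows from Lemma~\ref{nogap},'' and you have filled in the routine details.

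For $\beta(E)$ you have put your finger on a real issue that the paper's ``for the slope \ldots\ this is clear'' skates over.  In fact the lemma \emph{as literally stated} is false for $\beta$, so your ``more intrinsic approach'' cannot be completed.  Here is the obstruction.  Take $(V,\nabla)$ irreducible of rank~$2$ on a curve of genus $g\ge 2$ and let $L\subset V$ be a saturated line subbundle of degree $-n$; such $L$ exist for every $n\gg 0$, since for $\deg M\gg 0$ the bundle $V^{\ast}\otimes M$ is globally generated and a generic section is nowhere vanishing, giving a surjection $V\twoheadrightarrow M$ with kernel $L$.  The two-step filtration $F^1=L$ is automatically Griffiths-transverse, and in the associated graded $(E,\theta)$ the piece $E^0=V/L$ is a $\theta$-invariant sub-Higgs bundle of slope $n>0$, so $(E,\theta)$ is not semistable and $\beta(E)=n$.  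Thus $\beta$ is unbounded over all Griffiths-transverse filtrations of a fixed irreducible $(V,\nabla)$.  In particular the lower bound on $\deg(F^{p+1})$ that you hoped to extract from transversality and irreducibility does not exist.

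What rescues the proof of Theorem~\ref{convergence} is precisely your first route.  Along the specific sequence of filtrations produced by the iteration $F^{\cdot}\mapsto G^{\cdot}$, Lemma~\ref{decreasing} makes $\beta$ non-increasing, hence bounded above by its initial value $\mu_{\max}(V)$ (the starting trivial filtration has $Gr_F V=V$ with $\theta=0$).  Combined with $\beta>0$ and denominator dividing some $\rho\le r-1$, this gives finitely many values along the iteration, which is all that is ever used.  So your instinct that the monotonicity argument is ``the cleanest route via the intended application'' is right; it is also the only route, and the lemma should be read as a statement about the filtrations arising in the iterative process rather than about arbitrary Griffiths-transverse filtrations.
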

\begin{proof}
For the slope and rank this is clear. For $\gamma (E)$ it follows from Lemma \ref{nogap}.
\end{proof}

\noindent
{\em Proof of Theorem \ref{convergence}}
Assume first of all that $(V,\nabla )$ is irreducible. 
Under the operation $F^{\cdot}\mapsto G^{\cdot}$, the triple of invariants $(\beta , \rho , \gamma )$ 
(which takes only finitely many values by Lemma 
\ref{finite})
decreases strictly in the lexicographic ordering by Lemma \ref{decreasing} until we get to a gr-semistable filtration. 

For a general $(V,\nabla )$, glue together the filtrations provided by the previous paragraph
on the semisimple subquotients of its Jordan-H\"older filtration. This can be done after possibly shifting the indexing of the
filtrations.
\eop

It is an interesting question to understand what would happen if we tried to do the above procedure in the case $\dim (X)\geq 2$
where the destabilizing subobjects could be torsion-free sheaves but not
reflexive.

\section{Interpretation in terms of the nonabelian Hodge filtration}
\label{nahf}

Consider the ``nonabelian Hodge filtration'' moduli space 
$$
M_{\rm Hod}(X,r) = \{ (\lambda , V, \nabla ),\;\; \nabla : V\rightarrow V\otimes \Omega ^1_X, \;\; \nabla (ae)= a\nabla (e)+\lambda d(a)e\}  
$$
with its map $\lambda : M_{\rm Hod}(X,r) \rightarrow \aaa ^1$, such that: 
\newline
---$\lambda ^{-1}(0) = M_H$ is the Hitchin moduli space of semistable Higgs bundles of rank $r$ and degree $0$; and 
\newline
---$\lambda ^{-1}(1) = M_{\rm DR}$ is the moduli space of integrable connections of rank $r$.

The group $\Gm$ acts on $M_{\rm Hod}$ over its action on $\aaa ^1$, via the formula $t\cdot (\lambda , V, \nabla ) = (t\lambda , V , t\nabla )$.
Therefore all of the fixed points have to lie over $\lambda = 0$, that is they are in $M_H$. We can write
$$
(M_H ) ^{\Gm } =  \bigcup _{\alpha} P_{\alpha}
$$
as a union of connected pieces. 

\begin{lemma}
\label{limitlem}
For any $y\in M_{\rm Hod}$, the limit $\lim _{t\rightarrow 0}t\cdot y$ exists, and is in one of the $P_{\alpha}$. 
\end{lemma}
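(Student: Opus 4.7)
The plan is to split into two cases according to whether $\lambda(y)=0$ or $\lambda(y)\neq 0$, and in both cases produce an explicit extension of the orbit map $\ggg_m \to M_{\rm Hod}$, $t\mapsto t\cdot y$, across $t=0$.

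\textbf{Case 1: $\lambda(y)=0$.} Then $y\in M_{\rm H}$ corresponds to a semistable Higgs bundle $(E,\theta)$ and $t\cdot y = (0,E,t\theta)$. Existence of $\lim_{t\to 0}(E,t\theta)$ in $M_{\rm H}$ is the classical fact that the $\ggg_m$-action on the Hitchin moduli space has limits at the origin. One way to see this: the Hitchin map $h : M_{\rm H}\to B$ is proper, and under the action the characteristic polynomial scales as $h(E,t\theta)=t\cdot h(E,\theta)$ (with the weighted action on $B$), so the image in $B$ converges to $0$; the preimage of a neighborhood of $0$ is compact, so a limit exists. The limit is $\ggg_m$-fixed, hence lies in some $P_{\alpha}$.

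\textbf{Case 2: $\lambda(y)\neq 0$.} Rescaling by $\ggg_m$, we may assume $y=(1,V,\nabla)$. By Theorem \ref{convergence}, choose a gr-semistable Griffiths-transverse filtration $F^{\cdot}$ of $(V,\nabla)$ with associated graded semistable Higgs bundle $(E,\theta)=(Gr_F(V),\theta)$. Apply the Rees construction: form the $\Oo_{X\times \aaa^1_{\lambda}}$-module
$$
\xi := \sum_{p} \lambda^{-p}\, F^pV \; \subset \; V\otimes _{\cc}\cc[\lambda,\lambda^{-1}],
$$
which is locally free over $X\times \aaa^1$, restricts to $V$ at every $\lambda\neq 0$ (via the isomorphism multiplying $F^p$ by $\lambda^{-p}$), and restricts to $\bigoplus_p E^p=E$ at $\lambda=0$. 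Griffiths transversality $\nabla(F^p)\subset F^{p-1}\otimes \Omega^1_X$ is exactly what is needed to ensure that $\lambda\nabla$ extends to a $\lambda$-connection $\nabla_{\xi}:\xi\to \xi\otimes\Omega^1_X$ on $\xi$; at $\lambda=0$ the operator $\nabla_{\xi}|_{\lambda=0}$ becomes $\Oo_X$-linear and agrees with $\theta$ on $E$. This gives a morphism $\aaa^1_{\lambda}\to M_{\rm Hod}$ whose value at $\lambda\neq 0$ is $(\lambda,V,\lambda\nabla)=\lambda\cdot y$ and whose value at $\lambda=0$ is the point of $M_{\rm H}$ represented by $(E,\theta)$; semistability of $(E,\theta)$, guaranteed by gr-semistability, is precisely what ensures that this value lies in the moduli space rather than off it. Thus $\lim_{t\to 0}t\cdot y$ exists, and being $\ggg_m$-fixed it lies in some $P_{\alpha}$.

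\textbf{Expected obstacle.} The first case is essentially standard, so the substantive content is packaging the filtration produced by Theorem \ref{convergence} into a flat family over $\aaa^1$ via the Rees construction and checking that it descends to a morphism of moduli spaces. The delicate point is the moduli-theoretic one: one must verify that the Rees family is $\ggg_m$-equivariant, flat, and that its classifying map $\aaa^1\to M_{\rm Hod}$ really extends the orbit map, rather than merely giving a family whose isomorphism class at $\lambda=0$ is a specialization. This is where semistability of the associated graded matters, and where one uses the fact that $M_{\rm Hod}$ is a coarse moduli space separated over $\aaa^1$, so such an extension is unique once it exists.
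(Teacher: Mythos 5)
Your argument is essentially the paper's own proof: for $\lambda(y)\neq 0$ you rescale to $\lambda=1$, invoke Theorem \ref{convergence}, and package the gr-semistable filtration into the Rees family $\xi(V,F)$ with its extended $\lambda$-connection to get a $\Gm$-equivariant morphism $\aaa^1\to M_{\rm Hod}$ with special fiber $(Gr_F(V),\theta)$, exactly as in the paper; for $\lambda(y)=0$ you use properness of the Hitchin map, which is one of the two routes the paper indicates for that case. No substantive difference or gap.
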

\begin{proof}
An abstract proof was given in \cite{hfnac}. 
For $\lambda (y)\neq 0$
in which case we may assume $\lambda (y)=1$ i.e. $y\in M_{\rm DR}$, 
the convergence can also be viewed as a corollary of Theorem \ref{convergence}. Indeed, $y$ corresponds to a vector bundle with integrable connection
$(V,\nabla )$ and if we choose a gr-semistable Griffiths-transverse filtration $F^{\cdot}$ then the limit can be  calculated as
$$
\lim _{\lambda \rightarrow 0} (V,\lambda \nabla ) = (Gr_F(V),\theta ).
$$
This can be seen as follows. 
The Rees construction gives
a locally free sheaf 
$$
\xi (V,F) := \sum \lambda ^{-p}F^pV\otimes \Oo _{X\times \aaa ^1} \subset V\otimes \Oo _{X\times \Gm}.
$$
over $X\times \aaa ^1$ and by Griffiths transversality the product $\lambda \nabla$ extends to a $\lambda$-connection on $\xi (V,F)$ 
in the $X$-direction (here $\lambda$ denotes the coordinate on $\aaa ^1$). 
This family provides a morphism $\aaa ^1\rightarrow M_{\rm Hod}$ compatible with the $\Gm$-action and
having limit point 
$$
(\xi (V,F), \lambda \nabla )|_{\lambda = 0} = (Gr_F(V),\theta ).
$$
If $\lambda (y)=0$ i.e. $y\in M_H$ then a construction similar to that of Theorem \ref{convergence}
gives a calculation of the limit. Alternatively, on the moduli space of Higgs bundles 
it is easy to see from the properness of the Hitchin map \cite{Hitchin} that the limit exists. 
\end{proof}

We next note that the limit is unique. 
\begin{corollary}
\label{Sequivalent}
If $F^{\cdot}$ and $G^{\cdot}$ are two gr-semistable filtrations for the same $(V,\nabla )$ then
the Higgs bundles $(Gr _F(V),\theta _F)$ and $(Gr_G(V),\theta _G)$ are $S$-equivalent, that is the associated-graded polystable objects
corresponding to their Jordan-H\"older filtrations, are isomorphic. 
\end{corollary}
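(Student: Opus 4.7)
The plan is to recognize this as an immediate consequence of Lemma \ref{limitlem} together with separatedness of the moduli space $M_{\rm Hod}$. The proof of Lemma \ref{limitlem} gives, for each gr-semistable Griffiths-transverse filtration $F^{\cdot}$ on $(V,\nabla)$, an explicit morphism
$$
f_F : \aaa^1 \rightarrow M_{\rm Hod}(X,r)
$$
constructed from the Rees module $\xi(V,F)$ with its extended $\lambda$-connection, satisfying $f_F(\lambda)=[\lambda,V,\lambda\nabla]$ for $\lambda\neq 0$ and $f_F(0)=[(Gr_F(V),\theta_F)]$. Applying the same construction to $G^{\cdot}$ produces a morphism $f_G$ with $f_G(0)=[(Gr_G(V),\theta_G)]$.

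The restrictions $f_F|_{\Gm}$ and $f_G|_{\Gm}$ both send $\lambda$ to the point $[\lambda,V,\lambda\nabla]$, so they agree on the dense open subset $\Gm\subset\aaa^1$. Since $M_{\rm Hod}$ is a separated coarse moduli space (this is the moduli-theoretic fact from \cite{hfnac} that I would invoke rather than reprove), any two morphisms from $\aaa^1$ agreeing on a dense open subset are equal. Consequently $f_F(0)=f_G(0)$ as points of $M_{\rm Hod}$.

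Now the closed stratum $M_H=\lambda^{-1}(0)$ parameterizes $S$-equivalence classes of semistable Higgs bundles of rank $r$ and degree $0$, so the equality $[(Gr_F(V),\theta_F)]=[(Gr_G(V),\theta_G)]$ in $M_H$ is exactly the claim that the two associated-graded Higgs bundles are $S$-equivalent; equivalently, their Jordan-H\"older associated-graded polystable objects are isomorphic.

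The only non-formal ingredient is the separatedness of $M_{\rm Hod}$, i.e. the uniqueness of the $\Gm$-limit as $t\to 0$; this is the main potential obstacle but it is already built into the construction of $M_{\rm Hod}$ in \cite{hfnac} and requires no new work here. Everything else is the direct translation of the Rees-family picture from Lemma \ref{limitlem}.
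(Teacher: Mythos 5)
Your proof is correct and is essentially the paper's own argument made explicit: the paper's one-line proof ("the moduli space is a separated scheme whose points correspond to $S$-equivalence classes of objects") implicitly invokes exactly the two Rees families from Lemma \ref{limitlem} agreeing over $\Gm$ and separatedness of $M_{\rm Hod}$ forcing equality of the limit points in $M_{\rm H}$. No further comment is needed.
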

\begin{proof}
The moduli space is a separated scheme whose points correspond to $S$-equivalence classes of objects.
\end{proof}

Now, given that the limiting Higgs bundle is unique, we can use it to measure whether the partial oper structure will be unique or not:

\begin{proposition}
\label{filtunique}
Suppose $(V,\nabla )$ is a vector bundle with integrable connection and let $(E,\theta )$ be the unique polystable Higgs bundle 
in the $S$-equivalence class of the limit. Then the gr-semistable Griffiths transverse filtration for $(V,\nabla )$ is unique up to
translation of indices, if and only if $(E,\theta )$ is stable.
\end{proposition}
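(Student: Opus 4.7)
The plan is to pass through the Rees correspondence used in the proof of Lemma \ref{limitlem}: a gr-semistable Griffiths-transverse filtration $F^{\cdot}$ on $(V,\nabla)$ is equivalent to a $\Gm$-equivariant $\lambda$-connection family $\xi(V,F)$ over $X\times \aaa^1$ whose generic fiber is $(V,\lambda\nabla)$ and whose special fiber at $\lambda=0$ is the system of Hodge bundles $(Gr_F V,\theta_F)$. Under this correspondence the operation $F^p\mapsto F^{p-k}$ of translating the indices corresponds to twisting $\xi(V,F)$ by $\lambda^{-k}$, equivalently to shifting the Hodge grading of the limit by $k$.

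For the direction $(\Leftarrow)$, assume $(E,\theta)$ is stable; in particular $(V,\nabla)$ must be irreducible, for any nontrivial decomposition of $(V,\nabla)$ would degenerate to a nontrivial decomposition of $(E,\theta)$ as a Higgs bundle. Let $F^{\cdot}$ and $G^{\cdot}$ be two gr-semistable filtrations. By Corollary \ref{Sequivalent} combined with stability, $(Gr_F V,\theta_F)$ and $(Gr_G V,\theta_G)$ are each isomorphic to $(E,\theta)$ as Higgs bundles. Their induced system-of-Hodge-bundles structures correspond to two $\cc^{\ast}$-actions on $(E,\theta)$, and since $\operatorname{Aut}(E,\theta)=\cc^{\ast}$ by stability, these actions differ by a character $t\mapsto t^k$; hence the gradings differ by the shift $[k]$. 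Replacing $G^{\cdot}$ by its $k$-translate, we arrange that $Gr_F V\cong Gr_G V$ as systems of Hodge bundles. The two Rees families then agree on the generic fiber and have isomorphic $\Gm$-equivariant special fibers, and rigidity of the $\Gm$-equivariant extension to $\aaa^1$---valid because $\operatorname{Aut}$ remains $\cc^{\ast}$ along the whole family and $M_{\rm Hod}$ is separated---forces $\xi(V,F)=\xi(V,G)$, giving $F^{\cdot}=G^{\cdot}$.

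For the direction $(\Rightarrow)$, argue by contrapositive. Suppose $(E,\theta)$ is polystable but not stable and write $(E,\theta)=(E_1,\theta_1)\oplus(E_2,\theta_2)$ with both summands nonzero, so $\operatorname{Aut}(E,\theta)\supset (\cc^{\ast})^{2}$ strictly. Start from any gr-semistable filtration $F^{\cdot}$ realizing this decomposition as its associated graded. The $\cc^{\ast}$-action defining the Hodge grading of $Gr_F V$ is one 1-parameter subgroup of this torus; composing it with the character scaling only $E_1$ produces a distinct $\cc^{\ast}$-fixed structure on $(E,\theta)$, i.e.\ a system of Hodge bundles in which the indices of $E_1$ are shifted relative to those of $E_2$, and that is not a uniform translate of the original. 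Under the Rees correspondence, this modified structure at the limit extends to a new $\Gm$-equivariant family over $\aaa^1$ with the same generic orbit $\Gm\cdot(V,\nabla)\subset M_{\rm Hod}$, hence to a distinct gr-semistable filtration $\tilde F^{\cdot}$ of $(V,\nabla)$ not related to $F^{\cdot}$ by translation of indices.

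The principal difficulty in both directions is to make the Rees correspondence ``gr-semistable Griffiths-transverse filtrations on $(V,\nabla)$ $\longleftrightarrow$ $\Gm$-equivariant lifts over $\aaa^1$ of the orbit $\Gm\cdot(V,\nabla)\subset M_{\rm Hod}$'' fully precise, in particular establishing the extension step used in $(\Rightarrow)$ when $(V,\nabla)$ is irreducible. Once this correspondence is set up, counting filtrations modulo shifts reduces to counting $\cc^{\ast}$-fixed structures on $(E,\theta)$ modulo scalars, a set controlled by $\operatorname{Aut}(E,\theta)/\cc^{\ast}$: trivial if $(E,\theta)$ is stable, non-trivial otherwise, which is precisely the content of the proposition.
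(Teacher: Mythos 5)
Your overall strategy---counting $\cc ^{\ast}$-fixed structures on the limit modulo automorphisms via the Rees correspondence---is the right heuristic, but in both directions the decisive step is asserted rather than proved, and those steps are where the content of the proposition lies. In the non-stable direction you start ``from any gr-semistable filtration realizing the polystable decomposition $(E_1,\theta _1)\oplus (E_2,\theta _2)$ as its associated graded''; but Corollary \ref{Sequivalent} only gives $S$-equivalence, and the graded of a given gr-semistable filtration is in general semistable and not polystable, so such a filtration need not exist. More seriously, the claim that the re-graded limit (with the indices of $E_1$ shifted) ``extends to a new $\Gm$-equivariant family over $\aaa ^1$ with the same generic orbit, hence to a distinct filtration of $(V,\nabla )$'' is exactly the nontrivial point, which you yourself flag as the principal difficulty and do not establish. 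The paper avoids both problems by working directly on $V$: since $Gr_F(V)$ is semistable but not stable, it has a proper nonzero sub-system of Hodge bundles $H$ of the same slope, and the kernel construction \eqref{gdef} produces a new Griffiths-transverse filtration whose graded sits in the extension \eqref{modif}; this graded is again semistable, and since $0\neq H\neq Gr_F(V)$ the new filtration is not a translate of the old one. Without some such explicit construction the contrapositive does not get off the ground.

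In the stable direction, your use of ${\rm Aut}(E,\theta )=\cc ^{\ast}$ correctly shows that the two gradings at $\lambda =0$ agree up to an index shift, but the concluding ``rigidity'' step---justified by ``${\rm Aut}$ remains $\cc ^{\ast}$ along the family and $M_{\rm Hod}$ is separated''---is not a proof. Separatedness of $M_{\rm Hod}$ only identifies the limit points in the coarse moduli space, which is Corollary \ref{Sequivalent} again; it says nothing about the filtrations themselves, since a priori two distinct filtrations of the same $(V,\nabla )$ could have isomorphic associated gradeds. What must be shown is that the identity of $V$, viewed as an isomorphism of the two Rees families over $\Gm$, extends across $\lambda =0$ after shifting one filtration. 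The paper gets this from semicontinuity and base change: the relative $Hom$ of $\xi (V,F)$ and $\xi (V,G)$ over $\aaa ^1$ is a rank-one locally free sheaf with $\Gm$-action (rank one at $\lambda \neq 0$ by irreducibility of $(V,\nabla )$, and at $\lambda =0$ by stability of $(E,\theta )$), so after an appropriate shift the tautological section over $\Gm$ extends to a $\Gm$-invariant generator, i.e.\ an isomorphism of Rees sheaves, which is equality of the filtrations. Supplying these two arguments would essentially reproduce the paper's proof.
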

\begin{proof}
If the limiting Higgs bundle is stable, apply semicontinuity theory to the Rees families 
$(\xi (V,F), \lambda \nabla )$ and 
$(\xi (V,G), \lambda \nabla )$ on $X\times \aaa ^1$ (see \ref{limitlem}). The $Hom$ between these
is a rank one locally free sheaf over $\aaa ^1$ with action of $\Gm$, and this relative $Hom$ commutes 
with base change. After appropriately shifting one of
the filtrations we get a $\Gm$-invariant section which translates back to equality of the filtrations.

On the other hand, if the limiting Higgs bundle is not stable, we can choose a sub-system of Hodge bundles and
apply the construction \eqref{gdef} to change the filtration. The exact sequence \eqref{modif} shows that
the new filtration is different from the old.
\end{proof}

\section{The oper stratification}
\label{sec-os}

As is generally the case for a $\Gm$-action, the map $y\mapsto \lim _{t\rightarrow 0}t\cdot y$ is a constructible map from
$M_{\rm Hod}$ to the fixed point set $\bigcup _{\alpha}P_{\alpha}$. 

\begin{proposition}
\label{strata}
For any $\alpha$, the subset $G_{\alpha}\subset M_{\rm DR}(X,r)$ consisting of all points $y$ such that 
$\lim _{t\rightarrow 0}t\cdot y \in P_{\alpha}$ is locally closed. These partition the moduli space into 
the {\em oper stratification}
$$
M_{\rm DR}(X,r)=\bigcup _{\alpha}G_{\alpha}.
$$
Furthermore, for any point $p\in P_{\alpha}$ (which corresponds to an $S$-equivalence class of systems of Hodge bundles),
the set $L_p\subset M_{\rm DR}(X,r)$ of points $y$ with $\lim _{t\rightarrow 0}t\cdot y = p$, is a locally closed subscheme
(given its reduced subscheme structure). 
\end{proposition}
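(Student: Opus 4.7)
The plan is to realize the limit map as descending from a morphism of schemes via an attractor construction, then invoke general Bialynicki-Birula type results for $\Gm$-actions with convergence at $0$. Let $M_{\rm Hod}^+ := \underline{\mathrm{Hom}}^{\Gm}(\aaa^1, M_{\rm Hod})$ parametrize $\Gm$-equivariant morphisms $\aaa^1 \to M_{\rm Hod}$; it exists as a scheme by standard representability arguments, and carries evaluation morphisms $e_0 : M_{\rm Hod}^+ \to M_{\rm Hod}^{\Gm} = (M_H)^{\Gm}$ at $t=0$ and $e_1 : M_{\rm Hod}^+ \to M_{\rm Hod}$ at $t=1$.

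First I would show that $e_1$ surjects onto $M_{\rm DR}$. Given $(V,\nabla)\in M_{\rm DR}$, pick a gr-semistable Griffiths-transverse filtration $F^{\cdot}$ (existing by Theorem \ref{convergence}); the Rees family $(\xi(V,F), \lambda\nabla)$ from the proof of Lemma \ref{limitlem} is $\Gm$-equivariant on $X\times\aaa^1$ and so defines a point of $M_{\rm Hod}^+$ lying over $(V,\nabla)$. By Corollary \ref{Sequivalent}, any two such lifts share the same image under $e_0$, so the composition $e_0\circ e_1^{-1}$ descends to a well-defined set-theoretic limit map $\pi : M_{\rm DR} \to (M_H)^{\Gm}$, with $G_\alpha = \pi^{-1}(P_\alpha)$ and $L_p = \pi^{-1}(p)$ by construction.

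Second, each $P_\alpha$ is a connected component of $(M_H)^{\Gm}$, hence both open and closed there, so $(M_{\rm Hod}^+)_\alpha := e_0^{-1}(P_\alpha)$ is clopen in $M_{\rm Hod}^+$ and $M_{\rm Hod}^+=\bigsqcup_\alpha (M_{\rm Hod}^+)_\alpha$. Invoking the Bialynicki-Birula decomposition for quasi-projective schemes with $\Gm$-action admitting limits at $0$ (as developed in the singular and non-proper setting by Drinfeld-Gaitsgory), the restriction of $e_1$ to $(M_{\rm Hod}^+)_\alpha$ is a locally closed immersion onto its image, which over $M_{\rm DR}$ is exactly $G_\alpha$. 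This gives local closedness of $G_\alpha$ in $M_{\rm DR}$ and the disjoint decomposition $M_{\rm DR}(X,r)=\bigsqcup_\alpha G_\alpha$. For the single-point statement, $e_0^{-1}(\{p\})$ is closed in $(M_{\rm Hod}^+)_\alpha$ since $\{p\}$ is closed in the quasi-projective scheme $P_\alpha$; pushing forward by the locally closed immersion $e_1$ identifies $L_p$ as a closed subscheme of $G_\alpha$, hence locally closed in $M_{\rm DR}$, endowed with its reduced structure.

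The main obstacle is the precise verification that $e_1$ is a locally closed immersion on each attractor component $(M_{\rm Hod}^+)_\alpha$: the gr-semistable filtration is non-unique (Proposition \ref{filtunique}), so the fibres of $e_1$ above $M_{\rm DR}$ are not singletons, and $M_{\rm Hod}$ is singular. The cleanest route is to cite the general Bialynicki-Birula framework in the form suited to attracting $\Gm$-actions; a self-contained alternative is to run the iterative destabilization of \S \ref{constr} relatively over a trait $T=\mathrm{Spec}\,R$ (possibly after finite base change), using that the invariants $(\beta,\rho,\gamma)$ are bounded fiberwise so that the procedure terminates uniformly, and then to derive local closedness by combining the resulting relative Rees construction with fiberwise constancy of the Harder-Narasimhan numerical invariants of the limit system of Hodge bundles along each stratum.
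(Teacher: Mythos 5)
There is a genuine gap at the step you yourself flag as the main obstacle. The claim that $e_1$ restricted to each attractor component $(M_{\rm Hod}^+)_\alpha$ is a locally closed immersion is not available in the generality you need: Bialynicki-Birula--type immersion theorems are proved for \emph{smooth} schemes, and the Drinfeld--Gaitsgory formalism gives representability of the attractor functor and related structural facts, not that $e_1$ is an immersion (or even has locally closed image) when the ambient space is singular --- and $M_{\rm Hod}$ and $M_{\rm DR}$ are singular at strictly semistable points, which is precisely where the question is delicate. So the citation does not deliver the key point, and your fallback (running the destabilization of \S \ref{constr} over a trait and invoking constancy of Harder--Narasimhan invariants ``along each stratum'') would at best give constructibility and specialization information, not local closedness, and is partly circular since the strata are what you are trying to control. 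A separate, smaller error: the worry that the fibres of $e_1$ over $M_{\rm DR}$ are not singletons because the gr-semistable filtration is non-unique is misplaced. By separatedness of $M_{\rm Hod}$, a $\Gm$-equivariant morphism $\aaa ^1\rightarrow M_{\rm Hod}$ is determined by its restriction to $\Gm$, hence by its value at $1$; different Rees families (different filtrations) induce the \emph{same} morphism to the moduli space, which is exactly the content of Corollary \ref{Sequivalent}. So $e_1$ is a monomorphism; the non-uniqueness of Proposition \ref{filtunique} lives at the level of families on $X\times \aaa ^1$, not of points of the attractor.

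The paper's proof avoids all of this by an elementary reduction: the GIT construction provides a $\Gm$-linearized ample line bundle, so the moduli space embeds $\Gm$-equivariantly in some $\pp ^N$ with a \emph{linear} action. For a linear action the decomposition of $\pp ^N$ is explicit: the fixed loci $F_i$ are linear subspaces, the attracting sets $B_i$ are locally closed, and the limit map $B_i\rightarrow F_i$ is a morphism. Since every point of $M_{\rm DR}$ has a limit in the moduli space (Lemma \ref{limitlem}, i.e.\ Theorem \ref{convergence}), the set of points limiting into $F_i$ is just $M_{\rm DR}\cap B_i$, hence locally closed; refining by the connected components $P_{\alpha}$, which are open and closed in $M^{\Gm}\cap F_i$, and pulling back along the morphism $B_i\rightarrow F_i$ gives local closedness of each $G_{\alpha}$ and closedness of $L_p$ inside its stratum (then take the reduced structure). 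This is purely set-theoretic plus quasi-projectivity and requires no smoothness and no scheme-theoretic statement about the attractor, which is why the paper can dispose of it in two lines. Your functorial setup is fine as a framework, but to make it a proof you should replace the appeal to a general immersion theorem by exactly this equivariant-embedding argument.
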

\begin{proof}
This is the classical Bialynicki-Birula theory: the moduli space can be embedded $\Gm$-equivariantly in $\pp ^N$ with a linear action;
the stratification and fibrations are induced by those of $\pp ^N$ but with refinement of the $P_{\alpha}$ into connected components. 
\end{proof}

For the moduli of Higgs bundles we have a similar stratification with strata denoted $\widetilde{G}_{\alpha}\subset M_{\rm H}(X,r)$ again defined
as the sets of points such that $\lim _{t\rightarrow 0}t\cdot y \in P_{\alpha}$.

\subsection{Opers}
\label{opers}

The {\em uniformizing Higgs bundles \cite{Hitchin}} are of the form $E=E^1\oplus E^0=L\oplus L'$ a direct sum
of two line bundles with $E^0=L'\cong L\otimes K^{-1}$, such that $\theta : E^1\rightarrow E^0\otimes \Omega ^1_X$ is an isomorphism.
The space of these is connected, determined by the choice of $L\in Pic ^{g-1}(X)$. For bundles of rank $r$ 
we can let $P_{\alpha}$ be the space of symmetric powers $Sym^{r-1}(E)$. 
These systems of Hodge bundles are rigid up to tensoring with a line bundle: the determinant map 
$P_{\alpha}\rightarrow Pic^0(X)$ is finite. 

The classical moduli space of $GL(r)$-opers \cite{BDopers} \cite{Frenkel} is the subset $G_{\alpha}$ defined in Proposition \ref{strata}
corresponding to the space $P_{\alpha}$ of symmetric powers of uniformizing Higgs bundles. 

The stratum of classical opers $G_{\alpha}$ is closed, because the corresponding stratum $\widetilde{G}_{\alpha}$ is
closed in $M_{\rm H}$. It also has minimal dimension among the strata, as can be seen from Lemma \ref{lagrange} below.
We conjecture that it is the unique closed stratum and the unique stratum of minimal dimension. These are easy to see
in the case of rank $2$, see \S \ref{nestedness}.

\subsection{Variations of Hodge structure}
\label{vhs}

If $(V,\nabla , F^{\cdot}, \langle \cdot , \cdot \rangle )$ is a polarized
complex VHS then the underlying filtration $F^{\cdot}$ (which is Griffiths-transverse by definition) is gr-semistable. 
The Higgs bundle $(Gr_F(V),\theta )$ is the one which corresponds to $(V,\nabla )$ by the
nonabelian Hodge correspondence. This implies that if $(V,\nabla )$ is a VHS with irreducible monodromy representation then it is gr-stable. 
In this case the filtration $F^{\cdot}$ is unique and the process of iterating the destabilizing modification 
described in \S \ref{constr} provides a construction
of the Hodge filtration starting from just the bundle with its connection $(V,\nabla )$.

For any stratum $G_{\alpha}$ as in Proposition \ref{strata},
let $G^{\rm VHS}_{\alpha}\subset M_{\rm DR}(X,r)$ be the real analytic  moduli space of polarized complex variations of Hodge structure
whose underlying filtered bundle is in $G_{\alpha}$. 
We have a diagram of real analytic varieties
$$
\begin{array}{rcl}
G^{\rm VHS}_{\alpha}\;\; & \stackrel{\rm real}{\hookrightarrow} & \;\;\;\; G_{\alpha} \\
 {\scriptstyle \cong}\searrow &  & \swarrow \\
&  P_{\alpha} &
\end{array} .
$$
Under the nonabelian Hodge identification $\nu : M_{\rm DR}(X,r) \cong M_{\rm H}(X,r)$
the space $G^{\rm VHS}_{\alpha}$ of variations of Hodge structure is equal to $P_{\alpha}$ and the diagonal isomorphism in the above diagram is the identity
when viewed in this way. 

The other points of $G_{\alpha}$ don't necessarily correspond to points of $\widetilde{G}_{\alpha}$
under the nonabelian Hodge identification $\nu$, and indeed it seems reasonable to conjecture that 
\begin{equation}
\label{conjGGtilde}
G^{\rm VHS}_{\alpha} = G_{\alpha}\cap \nu ^{-1}(\widetilde{G}_{\alpha}).
\end{equation}

In a similar vein, let $M_B(X,r)_{\rr}$ denote the real subspace of representations which go into some possibly indefinite unitary group $U(p,q)$. 
\begin{lemma}
\label{lemGMBR}
Restricting to the subset of smooth points,
$G^{\rm VHS}_{\alpha}$ is a connected component of $G_{\alpha} \cap M_B(X,r)_{\rr}$. 
\end{lemma}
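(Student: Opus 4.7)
The plan is to show that $G^{\rm VHS}_\alpha$ is both open and closed in $G_\alpha \cap M_B(X,r)_{\rr}$ at smooth points; combined with the constancy of the discrete Hodge numbers along $G^{\rm VHS}_\alpha$, this singles out a single connected component. The inclusion $G^{\rm VHS}_\alpha \subseteq G_\alpha \cap M_B(X,r)_{\rr}$ is automatic: membership in $G_\alpha$ was noted in \S \ref{vhs}, and the polarization of a complex VHS is a flat indefinite Hermitian form, so the monodromy factors through some $U(p,q)$.

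Closedness follows from the identification $G^{\rm VHS}_\alpha \cong P_\alpha$ via $\nu$: $P_\alpha$ is a connected component of the closed $\Gm$-fixed locus in $M_H$ and hence is closed, while $\nu$ is a real analytic homeomorphism, so $G^{\rm VHS}_\alpha$ is closed in $M_{\rm DR}$, a fortiori in $G_\alpha \cap M_B(X,r)_{\rr}$.

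For openness at a smooth point $y$, I would match real dimensions. By Proposition \ref{strata} together with the Lagrangian property of the fibers $L_q$, $\dim_{\cc} G_\alpha = \dim_{\cc} P_\alpha + \tfrac{1}{2}\dim_{\cc} M_{\rm DR}$, while $\dim_{\rr} M_B(X,r)_{\rr} = \dim_{\cc} M_{\rm DR}$ since it is a real form. Granting the transversality $T_y G_\alpha + T_y M_B(X,r)_{\rr} = T_y M_{\rm DR}$ at the smooth VHS $y$, the codimension formula gives $\dim_{\rr} T_y (G_\alpha \cap M_B(X,r)_{\rr}) = 2\dim_{\cc} P_\alpha = \dim_{\rr} T_y G^{\rm VHS}_\alpha$. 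Combined with the automatic inclusion of tangent spaces $T_y G^{\rm VHS}_\alpha \subseteq T_y(G_\alpha \cap M_B(X,r)_{\rr})$, this forces local equality of the two real analytic germs at $y$, yielding openness.

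The main obstacle is verifying the transversality. To handle it, I would use the Hodge structure induced by the VHS on the deformation space $T_y M_{\rm DR} \cong \hhh^1(X, End(V,\nabla))$: this decomposes into pieces of Hodge type $(p,-p)$; the complex subspace $T_y G_\alpha$ contains the non-negative weight pieces (a consequence of the Rees construction of Lemma \ref{limitlem} applied infinitesimally at $q = \nu(y)$); and the real form $T_y M_B(X,r)_{\rr}$ is the fixed locus of the antilinear involution which swaps the $(p,-p)$ and $(-p,p)$ components. Every negative-weight piece is then realized as the image of a positive-weight piece under this involution, so $T_y G_\alpha$ and $T_y M_B(X,r)_{\rr}$ together span $T_y M_{\rm DR}$. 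This tangent-space computation is the technical heart of the argument; the remainder is formal.
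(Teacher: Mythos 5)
Your proposal is correct, and its core is the same Hodge-theoretic tangent-space computation that the paper carries out in \S\ref{hodgedef}; the difference lies in how the final linear algebra is concluded, and that is worth a comparison. The paper puts on $Def(V,\nabla )=\hhh ^1(End(V)\otimes \Omega ^{\cdot}_X)$ the weight-one real Hodge structure coming from the weight-zero real VHS $End(V)$, identifies $T_yG_{\alpha}$ with $F^0=\bigoplus _{k\geq 0}\hhh ^{k,1-k}$ (Lemma \ref{degen}) and $T_yM_B(X,r)_{\rr}$ with the real points $H_{\rr}$, and then computes the intersection directly: $F^0\cap H_{\rr}=(\hhh ^{1,0}\oplus \hhh ^{0,1})_{\rr}$, which is exactly $T_yG^{\rm VHS}_{\alpha}$; equality of tangent spaces at smooth points gives the component statement with no appeal to the Lagrangian property or to the dimension of the real form. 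You instead prove the transversality $F^0+H_{\rr}=Def(V,\nabla )$ (by the same mechanism: the conjugate of each piece with negative index lands inside $F^0$) and then match dimensions using the Lagrangian fibers of $G_{\alpha}\rightarrow P_{\alpha}$ (Lemma \ref{lagrange}) and the real-form count $\dim _{\rr}M_B(X,r)_{\rr}=\dim _{\cc}M_{\rm DR}$. This costs two extra inputs --- though there is no circularity, since the proof of Lemma \ref{lagrange} uses only Lemma \ref{degen} and Corollary \ref{symmetry}, and the real-form count is essentially the same assertion as $T_yM_B(X,r)_{\rr}=H_{\rr}$ --- but it buys a little robustness (you only need the containment $F^0\subseteq T_yG_{\alpha}$ together with the geometric dimension of $G_{\alpha}$), and you make the closedness of $G^{\rm VHS}_{\alpha}=\nu ^{-1}(P_{\alpha})$ explicit, which the paper leaves tacit. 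One small correction: the real Hodge structure on the deformation space has weight one, with pieces $\hhh ^{k,1-k}$, not pieces of type $(p,-p)$; this is harmless for your spanning step, but with the correct indexing the direct computation $F^0\cap H_{\rr}=(\hhh ^{1,0}\oplus \hhh ^{0,1})_{\rr}=T_yG^{\rm VHS}_{\alpha}$ is immediate and short-circuits the dimension count.
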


The proof will be given in \S \ref{hodgedef} below. 

On the other hand, it is easy to see that there are 
other connected components too, for example when $p,q >0$ a general representation $\pi _1\rightarrow U(p,q)$ will still correspond to a stable
vector bundle, so it gives a point in $G_{0} \cap M_B(X,r)_{\rr}$ which is not a unitary representation. These points probably correspond to
twistor-like sections of Hitchin's twistor space, but which don't correspond to preferred sections. 

It is an interesting question to understand how to distinguish $G^{\rm VHS}_{\alpha}$ among the connected components of 
$G_{\alpha} \cap M_B(X,r)_{\rr}$. 

\subsection{Families of VHS}
\label{famVHS}

We now explain one possible motivation for looking at the stratification by the $G_{\alpha}$, related to 
Lemma \ref{lemGMBR}. 

Suppose given a smooth projective family of curves $f:X\rightarrow Y$ over a base which is allowed to be quasiprojective.
Let $X_y:= f^{-1}(y)$ be the fiber over a point $y\in Y$. Then $\pi _1(Y,y)$ acts on $M_B(X_y, r)$. The fixed points are
the representations which come from global representations on the total space $X$ of the fibration (approximately, up to considerations
involving the center of the group and so on). In the de Rham point of view there is a ``connection'' on the
relative de Rham moduli space $M_{\rm DR}(X/Y,r)\rightarrow Y$. This can be called the ``nonabelian Gauss-Manin connection''
but is also known as the system of ``isomonodromic deformation'' equations, or Painlev\'e VI for the universal family of $4$-pointed $\pp ^1$'s.
The fixed points of the above action on the Betti space correspond
to global horizontal sections of the n.a.GM connection. These have been studied by many authors.  

The global horizontal sections will often be rigid (hence VHS's) and in any case can be deformed, as horizontal sections, to VHS's.
Let $\rho (y)\in M_B(X_y,r)$ denote a global horizontal section which is globally a VHS; then we will have
$$
\rho (y)\in G^{\rm VHS}_{\alpha}(X_y)
$$
for all $y$ in a neighborhood of an initial point $y_0$. The combinatorial data corresponding to the stratum $\alpha$ will be
invariant as $y$ moves; let's assume that we can say that the index sets of our stratifications remain locally constant as a 
function of $y$, and that $\rho (y)$ stays in the ``same stratum''---although
that would require further proof. Then we have, in particular, $\rho (y)\in G_{\alpha}(y)$. The stratum $G_{\alpha}(y)$ will
not be fully invariant by the n.a.GM connection \cite{Berlinger}, so $\rho (y)$ has to lie in the subset of points which, when displaced by
the n.a.GM connection, remain in the same (i.e. corresponding) stratum. We get a system of equations constraining
the point $\rho (y)$. One can hope that in certain special cases, these equations have only isolated points as solutions
in each fiber $G_{\alpha}(y)$.
A program for finding examples of global horizontal sections $\rho$ would be to look for these isolated points 
and then verify if the family of such points is horizontal. 

\subsection{The open stratum}
\label{unitary}

At the other end of the range of possible dimensions of strata, is the unique open stratum $G_0$ in each component of 
the moduli space. When $X$ is a smooth compact curve of genus $\geq 2$, the open stratum consists of connections of the form 
$(V,\partial  + A)$ where $V$ is a polystable vector bundle, $\partial$ is the unique flat unitary connection,
and $A\in H^0(End(V)\otimes \Omega ^1_X)$. The Griffiths-transverse filtration is trivial, and the corresponding stratum
of systems of Hodge bundles is $P_0= \Uu (X,r)$, the  moduli space of semistable vector bundles on $X$. 
The Higgs stratum is just the cotangent bundle  $\widetilde{G}_0= T^{\ast}P_0$ and $G_0$ is a principal 
$T^{\ast}P_0$-torsor over $P_0$. The space $G^{\rm VHS}_0$ is just the space of unitary representations. 
The situation becomes more interesting when we look at parabolic bundles or bundles on an orbifold. 

\section{The parabolic or orbifold cases}
\label{sec-parabolic}

Let $(X,\{ x_1,\ldots , x_k\})$ be a curve with some marked points, and fix semisimple unitary conjugacy
classes $C_1,\ldots , C_k\subset GL(r)$. We can consider the various moduli spaces of representations, 
Higgs bundles, connections, or $\lambda$-connections on $U:= X-\{ x_1,\ldots , x_k\}$ with
logarithmic structure at the points $x_i$ and corresponding monodromies contained in the
conjugacy classes $C_i$ respectively. These objects correspond to  parabolic vector bundles with real parabolic
weights and $\lambda$-connection $\nabla$ respecting the parabolic 
filtration and inducing the appropriate multiple of the identity on each graded 
piece.
If in addition the conjugacy classes are assumed to be of finite order, the
parabolic weights should be rational, and our objects may then be viewed as lying on an 
orbicurve or Deligne-Mumford stack $X'$ with ramification orders $m_i$ corresponding to the common denominators of the weights
at $x_i$  \cite{Biswas} \cite{Boden} \cite{IyerSimpson}.
Denote by 
$$
M_{\rm H}(X'; C_1,\ldots , C_k) \subset M_{\rm Hod}(X'; C_1,\ldots , C_k) \supset 
M_{\rm DR}(X'; C_1,\ldots , C_k) 
\;\;\; \mbox{etc}
$$
the resulting moduli spaces. One could further assume for simplicity that there exists a global projective etale Galois covering $Z\rightarrow X'$ with Galois group $G$. The map $Z\rightarrow X$ is a ramified Galois covering
such that the numbers of branches in the  ramification points above $x_i$ are always equal to $m_i$. Local systems or other objects on $X'$ are
the same thing as  $G$-equivariant objects on $Z$. This enables an easy construction of the moduli stacks and 
spaces. 
More general constructions of moduli spaces of parabolic objects can be found in \cite{Yokogawa} \cite{Thaddeus} \cite{Nakajima} \cite{LogaresMartens}
\cite{Konno} \cite{InabaIwasakiSaito} \cite{BodenYokogawa}.

The correspondence between Higgs bundles and local systems works in this case, as can easily
be seen by pulling back to the covering $Z$ (although the analysis can also be done directly).
All of the other related structures also work the same way. For example, there is a proper Hitchin fibration 
on the space of Higgs bundles, and similarly the nonabelian Hodge filtration satisfies the positive weight property
saying that limits exist as in Lemma \ref{limitlem}. 

A first classical case appearing already in the paper of Narasimhan and Seshadri \cite{NarasimhanSeshadri}
is when there is a single parabolic point $x_1$ and the residual conjugacy class of the connection is that of the scalar matrix $d/r$.
Orbifold bundles of this type correspond to usual bundles on $X'$ of degree $d$, which can have projectively flat connections.
This led to moduli spaces in which all semistable points are stable when $(r,d)=1$. 

The semistable $\Rightarrow$ stable phenomenon occurs fairly generally in the orbifold or parabolic setting. The  $C_j$ have
to satisfy the equation $\prod {\rm det}(C_j) = 1$. However, if the eigenvalues of the individual blocks are chosen
sufficiently generally, it often happens that there is no sub-collection of $0<r'<r$ eigenvalues at each point, such that
the product over all points is equal to $1$. We call this {\em Kostov's genericity condition}.
It implies that all representations are automatically irreducible,
or in terms of vector bundles with connection or Higgs bundles, semistable objects are automatically stable.

This situation is particularly relevant for our present discussion: 

\begin{lemma}
Suppose the conjugacy classes $C_1,\ldots , C_k$ satisfy Kostov's genericity condition, then 
any vector bundle with connection $(V,\nabla )\in M_{DR}(X'; C_1,\ldots , C_k)$ has a unique partial oper 
structure, the filtration being unique up to shifting the indices. Also, in this case the 
moduli stacks $\Mm _{\cdot}(X'; C_1,\ldots , C_k)$ are smooth, and they are $\Gm$-gerbs over the corresponding moduli spaces 
$M _{\cdot}(X'; C_1,\ldots , C_k)$. 
\end{lemma}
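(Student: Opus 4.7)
The plan is to treat the three assertions in order, using what has already been set up in the paper.

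For uniqueness of the partial oper structure, I would first note that Theorem \ref{convergence} carries over verbatim to the parabolic/orbifold setting, either by direct adaptation of the destabilizing-modification algorithm to parabolic Higgs bundles or, more efficiently, by pulling everything back to the Galois cover $Z\to X'$ and working $G$-equivariantly. This produces a partial oper structure $(V,\nabla,F^{\cdot})$ with associated-graded $(E,\theta)$ a semistable parabolic Higgs bundle having the prescribed residual data. Kostov's genericity condition, which was introduced precisely as a numerical constraint ruling out subobjects with balanced determinants at the parabolic points, then forces $(E,\theta)$ to be stable: any destabilizing sub-Higgs-bundle of rank $0<r'<r$ would have residues consisting of a sub-collection of eigenvalues whose determinants at each $x_j$ multiply to $1$, contradicting the hypothesis. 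Once $(E,\theta)$ is stable, Proposition \ref{filtunique} (which again transfers to the parabolic setting without change) gives uniqueness of $F^{\cdot}$ up to shifting indices.

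For smoothness of the moduli stack, I would work out the relevant deformation complex. At a point $(V,\nabla)\in \Mm_{\rm DR}(X';C_1,\ldots,C_k)$ the tangent space and the obstruction space are $\hhh^1$ and $\hhh^2$ of the two-term complex $\mathrm{ParEnd}(V)\xrightarrow{[\nabla,-]}\mathrm{ParEnd}(V)\otimes\Omega^1_{X'}(\log)$, where $\mathrm{ParEnd}$ denotes parabolic endomorphisms preserving the residual data. Since $X'$ is a one-dimensional Deligne--Mumford stack, $\hhh^2$ of a two-term complex of coherent sheaves with the differential landing in degree one vanishes for dimension reasons (equivalently, pull back to $Z$ and use that $Z$ is a curve). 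Hence the stack is smooth at every point. The analogous argument handles $\Mm_{\rm H}$ and $\Mm_{\rm Hod}$.

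For the $\Gm$-gerbe statement, the key point is that under Kostov's genericity every semistable object is stable, and stable objects are simple: by Schur their automorphism group is exactly the scalars $\Gm$ (here one uses that $(V,\nabla)$ is irreducible, so $\mathrm{End}(V,\nabla)=\cc$, and similarly on the Higgs and Hodge sides). A smooth Artin stack all of whose geometric points have automorphism group canonically identified with $\Gm$, and whose coarse moduli space is the GIT moduli space $M_{\cdot}(X';C_1,\ldots,C_k)$, is by definition a $\Gm$-gerbe over that coarse space; one checks the local triviality by the standard argument using a Poincar\'e-type universal family on an \'etale cover, which exists since the stability condition is open and stable points admit local universal families up to the scalar ambiguity.

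The main obstacle I anticipate is the first step: verifying carefully that Kostov's genericity condition, phrased for connections in terms of the eigenvalues of the conjugacy classes $C_j$, translates correctly into the statement that the associated-graded parabolic Higgs bundle has no destabilizing sub-Higgs-bundle. The residues of $\theta$ on $(E,\theta)$ are the nilpotent parts of the logarithms of the $C_j$, so the subtlety is to track how the parabolic weights and the eigenvalues combine in the slope inequality defining destabilization, and to confirm that the combinatorial genericity condition is strong enough to exclude all potential Hodge-type subobjects and not merely connection-preserving subobjects.
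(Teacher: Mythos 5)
Your treatment of the uniqueness and gerbe statements matches the paper's: Kostov genericity makes every semistable object (in particular the associated-graded parabolic Higgs bundle) stable, so Proposition \ref{filtunique}, transported to the orbifold setting, gives uniqueness of the filtration up to shift; and for the gerbe one uses that stable objects are simple, so all automorphism groups are $\Gm$, together with the \'etale-local fine moduli coming from the GIT construction at stable points. Your final caveat about translating the genericity condition into the Higgs slope inequality is well taken but is exactly the point the paper absorbs into the statement ``semistable $\Rightarrow$ stable holds for the associated-graded as well,'' so there is no real divergence there.

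The smoothness argument, however, contains a genuine error. It is not true that $\hhh ^2$ of a two-term complex of coherent sheaves on a curve (or on the one-dimensional stack $X'$, or after pulling back to $Z$) vanishes for dimension reasons: the hypercohomology spectral sequence gives $\hhh ^2 = \mathrm{coker}\bigl( H^1(\mathcal{F}^0)\rightarrow H^1(\mathcal{F}^1)\bigr)$, which is nonzero in general. The de Rham complex itself, $\Oo _X\rightarrow \Omega ^1_X$, has $\hhh ^2 \cong \cc$, and likewise for the endomorphism complex $End(V)\otimes \Omega ^{\cdot}_X$ of an irreducible connection one has $\hhh ^2\cong \hhh ^0{}^{\ast}\cong \cc$ by duality, not $0$. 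The correct argument, and the one the paper uses, is that the obstructions land in the \emph{trace-free} part of $\hhh ^2$, which is Serre/Poincar\'e dual to the trace-free part of $\hhh ^0$; the latter vanishes because under Kostov genericity every object is stable, hence simple. So you should replace the ``dimension reasons'' step by this duality-plus-simplicity argument (note it genuinely uses the genericity hypothesis, whereas your version would claim smoothness unconditionally, which is not what the lemma asserts).
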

\begin{proof}
If all semistable objects are automatically stable,
then the same is true for the associated-graded Higgs bundles. Proposition \ref{filtunique}, which works equally
well in the orbifold case, implies that the filtration is unique up to a shift. 

For the statement about moduli stacks, stable objects admit only scalar automorphisms, and the GIT construction
of the moduli spaces of stable objects yields etale-locally fine moduli, which is enough to see the gerb statement. 
For smoothness, the obstructions land in the trace-free part of  $H^2$ which
vanishes by duality. 
\end{proof}

Note that when there is a single point and the conjugacy class is scalar multiplication by a primitive $r$-th root of
unity, this is Kostov-generic, and we are exactly in the original situation considered by Narasimhan and Seshadri
corresponding to bundles of degree coprime to the rank. 

\begin{corollary}
If the conjugacy classes satisfy Kostov's genericity condition, then $M_{\rm Hod}(X'; C_1,\ldots , C_k)$ is
smooth over $\aaa ^1$, and the fixed point sets of the $\Gm$-action $P_{\alpha}$ are smooth. The projections
$G_{\alpha}\rightarrow P_{\alpha}$ and $\widetilde{G}_{\alpha}\rightarrow P_{\alpha}$ 
are smooth fibrations topologically equivalent to the 
normal bundle $N_{\widetilde{G}_{\alpha}/P_{\alpha}}\rightarrow P_{\alpha}$.
\end{corollary}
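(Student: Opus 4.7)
The plan is to combine the previous lemma's deformation-theoretic smoothness argument with classical Bialynicki-Birula theory applied to the $\Gm$-action on $M_{\rm Hod}$. The lemma already gives smoothness of the moduli stack at each fixed value of $\lambda$; the same hypercohomology argument works uniformly in $\lambda$. Concretely, the tangent-obstruction theory of a $\lambda$-connection $(\lambda, V, \nabla)$ is controlled by the two-term complex of trace-free parabolic endomorphisms of $V$ with differential $[\nabla, \cdot]$, and Kostov genericity forces the $\hhh^2$ obstruction to lie in the trace-free part, which vanishes by Serre duality on the orbicurve $X'$. This gives smoothness of the total moduli stack, and a dimension count then shows the projection $\lambda : M_{\rm Hod} \to \aaa^1$ is smooth. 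Smoothness of each $P_\alpha$ is then the classical fact that the fixed locus of a reductive group action on a smooth Deligne-Mumford stack is smooth, with tangent space equal to the weight-zero part of the ambient tangent space.

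For the fibration statement, I would invoke the Bialynicki-Birula decomposition. By Lemma \ref{limitlem} the map $y \mapsto \lim_{t \to 0} t \cdot y$ is defined on all of $M_{\rm Hod}$, and by Proposition \ref{strata} the sets $G_\alpha$ and $\widetilde{G}_\alpha$ are locally closed. For a smooth variety with $\Gm$-action for which all such limits exist, BB theory asserts that each attracting set is smooth and the limit map is a Zariski-locally trivial affine bundle over $P_\alpha$ whose fibers identify with the positive-weight part of the normal bundle $N_{\widetilde{G}_\alpha/P_\alpha}$. Applying this inside $M_{\rm H}$ gives the fibration $\widetilde{G}_\alpha \to P_\alpha$; applying it inside $M_{\rm Hod}$ and restricting to the $\lambda = 1$ slice gives the one for $G_\alpha \to P_\alpha$. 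Topological equivalence with the normal bundle then follows by choosing a $C^\infty$ trivialization of the Zariski-local affine bundle charts.

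The main obstacle, I expect, is the rigorous justification of Bialynicki-Birula in our quasi-projective stacky setting, since the classical statement is for projective schemes. The natural route is local linearization: by Sumihiro's theorem (and its Deligne-Mumford generalization) the $\Gm$-action admits \'etale-equivariant linear local models near each fixed point, reducing the claim to the linear case where the attracting cell is literally the sum of positive-weight tangent directions. These linear models then glue over the smooth base $P_\alpha$ to produce the advertised affine bundle structure topologically equivalent to the positive part of the normal bundle. An equivalent approach, paralleling the proof of Proposition \ref{strata}, is to use a $\Gm$-equivariant projective completion furnished by an ample line bundle and deduce the decomposition from the projective case, taking care that the added boundary does not intersect the strata under consideration.
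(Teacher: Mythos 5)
Your proposal reaches the right conclusions, and on smoothness it coincides with the paper (unobstructedness via vanishing of the trace-free $\hhh ^2$ from the preceding lemma, plus Bialynicki-Birula theory for smoothness of the fixed loci); but on the fibration statement you take a genuinely different route. The paper does not invoke the Bialynicki-Birula affine-fibration structure theorem for the attracting cells at all: since $\Gm$ does not act on the slice $\lambda =1$, it works instead with the attractor $G_{\alpha}^{\rm Hod}\subset M_{\rm Hod}$, the section of variations of Hodge structure $G^{\rm Hod}_{\alpha , VHS}\cong P_{\alpha}\times \aaa ^1$, and an explicit differential-topological gluing: a tubular neighborhood $T$ of this section, transversality of the $\rr ^{\ast}_{>0}$-flow to $\partial T$ for small $|\lambda |$, and the map $R(t,x)=t\cdot F(x,t^{-1})$ glued with the rescaled fiber $\epsilon ^{-1}\cdot T_{\epsilon}$, all done relative to $P_{\alpha}$, to produce the homeomorphism with the normal bundle. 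Your route -- BB structure of the cell in $M_{\rm Hod}$ (justified by Sumihiro linearization or an equivariant completion, as you indicate; note the completion may be singular, but the cell over $P_{\alpha}$ stays inside the smooth locus because the boundary is closed and invariant, so its fixed points cannot lie in $P_{\alpha}$) followed by slicing at $\lambda =1$ -- can be completed, but the step you state as immediate is where the real content sits: in a $\Gm$-equivariant local trivialization of the cell the function $\lambda$, having weight one, is a fiberwise \emph{linear} form in the weight-one coordinates and is nowhere zero on the fibers by smoothness over $\aaa ^1$, so $\lambda ^{-1}(1)$ is an affine subbundle modeled fiberwise on $\lambda ^{-1}(0)=\widetilde{G}_{\alpha}$; and the global identification with $N_{\widetilde{G}_{\alpha}/P_{\alpha}}$ needs more than ``choosing $C^{\infty}$ trivializations of the charts,'' since the BB transition functions are only weight-filtered polynomial automorphisms -- one must contract them to their linear parts by conjugating with the $\Gm$-action, or argue that the resulting affine bundle admits a global continuous section. (Also, for the cell in $M_{\rm Hod}$ the fiber is the positive part of $N_{P_{\alpha}/M_{\rm Hod}}$, one dimension larger than $N_{\widetilde{G}_{\alpha}/P_{\alpha}}$; the extra weight-one direction is exactly the one eaten by the equation $\lambda =1$.) What each approach buys: the paper's gluing avoids any appeal to the algebraic structure theory of attractors on a quasi-projective moduli space, at the price of an ad hoc topological construction; yours, once the linearization bookkeeping above is supplied, gives Zariski-local triviality and exhibits the strata as affine bundles, which is a sharper statement.
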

\begin{proof}
The fixed-point sets are smooth by the theory of Bialynicki-Birula. 
Let $G_{\alpha}^{\rm Hod}\subset M_{\rm Hod}(X'; C_1,\ldots , C_k)$ denote the
subset of points whose limit lies in $P_{\alpha}$. It is smooth, and the
fiber over $\lambda = 0$ is the smooth $\widetilde{G}_{\alpha}$. By the action of $\Gm$,
the map $G_{\alpha}^{\rm Hod}\rightarrow \aaa ^1$ is smooth everywhere. Let
$G_{\alpha , VHS}^{\rm Hod}\subset G_{\alpha}^{\rm Hod}$ denote the 
subspace of points which correspond to variations of Hodge structure.
The preferred-section trivialization gives
$$
G_{\alpha , VHS}^{\rm Hod}\cong P_{\alpha}\times \aaa ^1
$$
compatibly with the $\Gm$-action.

Let $T\subset G_{\alpha}^{\rm Hod}$ be a tubular neighborhood, and denote
by $T_0$ the fiber over $\lambda = 0$. We can suppose that $\partial T_0$
is transverse to the vector field defined by the action of $\rr ^{\ast}_{>0}\subset \Gm$.
Consequently, there is $\epsilon$ such that $\partial T$ is transverse to this vector
field for any $| \lambda |<2 \epsilon$.

Choose a trivialization $F:\partial T_0\times \aaa ^1\cong \partial T$ compatible with the map to $\aaa ^1$. Then define a map
$$
R : \rr ^{\ast}_{>0} \times \partial T_0 \rightarrow G_{\alpha},
$$
$$
R(t,x):= t\cdot F(x, t^{-1}).
$$
This is a diffeomorphism in the region $t> \epsilon ^{-1}/2$. 
On the other hand, let $T_{\epsilon}$  be the fiber over $\lambda = \epsilon$
and consider the map $\epsilon ^{-1}: T_ {\epsilon} \rightarrow G_{\alpha}$.
This glues in with the map $R$ defined on the region $t\geq \epsilon ^{-1}$
to give a topological trivialization of $G_{\alpha}$. Everything can be done relative to $P_{\alpha}$
so we get a homeomorphism between  $G_{\alpha}\rightarrow P_{\alpha}$ and the normal bundle of $P_{\alpha}$ in $G_{\alpha}$.
The same happens for $\widetilde{G}_{\alpha}\rightarrow P_{\alpha}$.
\end{proof}

\subsection{The Hodge type of the open stratum}
\label{hodgetype}
The open stratum consists of variations of Hodge structure of a certain type.
This type can change as we move the conjugacy classes. For example, the set of vectors of conjugacy classes $(C_1,\ldots , C_k)$
for $k$ points in $\pp ^1$, for which there exists a unitary representation, is defined by some inequalities
on the logarithms of the eigenvalues \cite{AW} \cite{Belkale} \cite{BiswasUnitary} \cite{BodenYokogawa} \cite{TelemanWoodward}. 

If we fix a collection of partitions at each point, then the space of unitary conjugacy classes
corresponding to those partitions is a real simplex, and the set of finite-order conjugacy classes is the set of
rational points therein. The set of non-Kostov-generic points is a union of hyperplanes, so the set of Kostov-generic
points decomposes into a union of open chambers bounded by pieces of hyperplanes (in particular, the chambers are
polytopes).

Suppose that we know that the varieties $M_{\cdot}(X';C_1,\ldots , C_k)$ are connected---this is the
case, for example, if at least one of the conjugacy classes has distinct eigenvalues \cite{KostovConnected}.
Then there is a single open stratum. If the genericity condition is satisfied so
all objects are stable, then the partial oper structure at a general point is unique, 
and its Hodge type (up to translation) depends only on the conjugacy classes. 
Furthermore, given a Kostov-generic collection of unitary conjugacy classes 
$(C_1,\ldots , C_k)$  and a parabolic system of Hodge bundles
with these conjugacy classes, if we perturb slightly the parabolic weights then the object remains stable.
This shows that the Hodge type for the unique open stratum is constant on the Kostov-generic chambers. 

A limit argument should show that if we fix a particular Hodge type $\{ h^{p,q}\}$, then the set of 
vectors of conjugacy classes $(C_1,\ldots , C_k)$ for which the points in the open stratum admit
a partial oper structure with given $h^{p,q}$, is closed. We don't do that proof here, though---let's just assume
it's true. It certainly holds for the unitary case $h^{0,0}=r$. This closed set is then a closed polytope.
For the unitary case, it is the polytope found and studied by 
Boden, Hu, Yokogawa \cite{BodenHu} \cite{BodenYokogawa}, Biswas \cite{BiswasUnitary} \cite{BiswasUnitary2}, 
Agnihotri, Woodward \cite{AW}, Belkale \cite{Belkale}.  The boundary is a union of pieces of the
non-Kostov-generic hyperplanes. 

The fact that the Hodge type varies in a way which is locally constant over polytopes fits into the
general philosophy which comes out of the results of Libgober \cite{Libgober} and Budur \cite{Budur}.
There is a direct relationship in the rigid case: by Katz's algorithm the unique point in a rigid moduli
space can be expressed motivically, and the Hodge type is locally constant on polytopes by the theory of
\cite{Libgober} \cite{Budur}.

There seems to be a further interesting phenomenon going on at the boundary of these polytopes: the complex variations of Hodge
structure corresponding to the fixed point limits of points in the open stratum, become automatically reducible. 
In the unitary case this is pointed out in several places in the references
(see for example Remark (4) after Theorem 7 on page 75 of \cite{Belkale}, or also
the last phrase of Theorem 3.23 of \cite{BiswasUnitary2}): 
irreducible unitary representations exist only in the interior of the polytope. A sketch of proof in general is
to say that if there is a stable system of Hodge bundles with given parabolic structure then the parabolic structure
can be perturbed keeping stability and keeping the same Hodge type (it is the same argument as was used above at the interior points
of the chambers, which is indeed essentially the same as in the references). 

In other words, at the walls between the chambers, points in the open stratum become gr-semistable but not gr-stable. 
Now, the non-uniqueness of the partial oper structure, Proposition \ref{filtunique}, is exactly what allows the Hodge type to jump. 

For generic $(C_1,\ldots , C_k)$ where the moduli space is smooth, the space of complex variations of Hodge structure
corresponding to the open stratum is a real form of the Betti moduli space.
It is defined algebraically in $M_{B}(X;C_1,\ldots , C_k)$ as a connected component of the space of representations into some
$U(p,q)$. 

However, at boundary points $(C_1,\ldots , C_k)$ between the different Hodge polytopes this subspace is concentrated on the singular
locus of reducible representations (whereas in most cases there still exist irreducible representations). The open stratum is
presented as a cone over a fixed point set inside the singular locus. The reader will be convinced that this kind of thing can happen
by looking at the case of the real circle $x^2+y^2=t$: for $t>0$ it is a real form of the complex quadratic curve, but at $t=0$
the real points are just the singularity of two crossing complex lines. It is an interesting question to understand what the degeneration
of the real form of the smooth space looks like for the case of $M_{B}(X;C_1,\ldots , C_k)$.

\section{Deformation theory}
\label{deformations}

The deformation theory follows \cite{BiswasRamanan} \cite{Bottacin} \cite{Markman} and others. The discussion extends without further mention
to the case where $X$ is an orbifold. Further work is needed \cite{YokogawaDef} \cite{Bottacin}
for more general parabolic cases corresponding for example to non-unitary conjugacy classes of
local monodromy operators.  

Suppose $(V,\nabla )$ is an irreducible connection. 
Consider the complex 
$$
End(V)\otimes \Omega ^{\cdot}_X := [End(V)\stackrel{d}{\rightarrow}End(V)\otimes \Omega ^{1}_X].
$$
The space of infinitesimal deformations, or the tangent space to the moduli stack $\Mm _{\rm DR}$ at $(V,\nabla )$,
is
$$
Def(V,\nabla )= \hhh ^1(End(V)\otimes \Omega ^{\cdot}_X ).
$$
Given a Griffiths-transverse filtration $F^{\cdot}$ we get a decreasing filtration of the complex
$End(V)\otimes \Omega ^{\cdot}_X$ 
defined by
$$
F^p(End(V)) := \{ \varphi \in End(V), \;\;\; \varphi : F^qV \rightarrow F^{p+q}V\} , 
$$
$$
F^p(End(V)\otimes \Omega ^{1}_X):= F^{p-1}(End(V))\otimes \Omega ^{1}_X.
$$
The $F^p(End(V)\otimes \Omega ^{\cdot}_X)$ are subcomplexes, so we can take the spectral sequence of the filtered complex 
$(End(V)\otimes \Omega ^{\cdot}_X,F^{\cdot})$. It induces a filtration $F^{\cdot}Def(V,\nabla )$ and the spectral sequence is 
\begin{equation}
\label{specseq}
\hhh ^i(Gr^{p}_F(End(V)\otimes \Omega ^{\cdot}_X)) = E^{p,i-p}_1 \Rightarrow Gr^{p}_F\hhh ^i(End(V)\otimes \Omega ^{\cdot}_X ).
\end{equation}
The obstruction theory is controlled by the $\hhh ^2$ of the trace-free part, so if $(V,\nabla )$ is irreducible then 
the deformations are unobstructed, there are no problems with extra automorphisms, and the moduli space is smooth. 

The limiting system of Hodge bundles $(Gr_F(V),\theta )$ has its own deformation theory in the world of Higgs bundles.
The complex $End(Gr_F(V))\otimes \Omega ^{\cdot}_X$ is made as above, but the differentials using $\theta$ are 
$\Oo_X$-linear maps. The tangent space to the moduli stack $\Mm _{\rm H}$ at $(Gr_F(V),\theta )$ is calculated as
$$
Def(Gr_F(V),\theta ) = \hhh ^1(End(Gr_F(V))\otimes \Omega ^{\cdot}_X ).
$$
The complex $End(Gr_F(V))\otimes \Omega ^{\cdot}_X$ has a direct sum decomposition, and indeed it is the associated-graded of the previous one:
$$
End(Gr_F(V))\otimes \Omega ^{\cdot}_X = Gr_F(End(V)\otimes \Omega ^{\cdot}_X).
$$
This tells us that the $E^{\cdot , \cdot}_1$ term of the previous spectral sequence corresponds to the deformation theory of $(Gr_F(V),\theta )$.

\begin{lemma}
\label{degen}
If $(Gr_F(V),\theta )$ is stable then 
the spectral sequence \eqref{specseq} degenerates at $E_1$ with
\begin{equation}
\label{grdecomp}
Gr^{p }_F (Def(V,\nabla )) = 
\hhh ^1 \left( Gr^p_F(End(V)\stackrel{\theta}{\rightarrow}Gr^{p-1}_F(End(V)\otimes \Omega ^{\cdot}_X \right) .
\end{equation}
Let $T(G_{\alpha}/P_{\alpha})$ denote the relative tangent bundle of the fibration $G_{\alpha} \rightarrow P_{\alpha}$
where it is smooth. Then 
the two middle subspaces of the filtration at $p=0,1$ are interpreted
in
a geometric way as
$$
F^0 Def(V,\nabla ) = T(G_{\alpha})_{(V,\nabla )},\;\;\; \mbox{and}\;\;\; F^1 Def(V,\nabla ) = T(G_{\alpha}/P_{\alpha})_{(V,\nabla )}.
$$
\end{lemma}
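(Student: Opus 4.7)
The plan is to realize both sides of \eqref{grdecomp} as fibers of a single $\Gm$-equivariant coherent sheaf on $\aaa ^1$, and then apply the Rees correspondence. Concretely, I would form the Rees $\lambda$-connection $(\xi (V,F), \lambda \nabla )$ on $X\times \aaa ^1$ (as in the proof of Lemma \ref{limitlem}) and consider the relative endomorphism complex
$$
\Ee ^{\cdot} := End(\xi (V,F))\otimes \Omega ^{\cdot}_{X\times \aaa ^1/\aaa ^1}
$$
with its $\lambda$-connection differential. Its relative hypercohomology $\Ee := R^1\pi _{\ast}\Ee ^{\cdot}$ is a $\Gm$-equivariant coherent sheaf on $\aaa ^1$ whose fiber at $\lambda =1$ is $Def(V,\nabla )$ and whose fiber at $\lambda =0$ is $Def(Gr_F(V),\theta )=\bigoplus _p\hhh ^1(Gr^p_F End(V)\otimes \Omega ^{\cdot}_X)$, the latter graded by the $\Gm$-weight.

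The next step is to show that $\Ee$ is locally free. Stability of $(Gr_F(V),\theta )$ makes $M_{\rm H}$ smooth at $p$, so $\dim Def(Gr_F(V),\theta )=\dim M_{\rm H}$. The same stability implies that $(V,\nabla )$ is itself irreducible---any nontrivial sub-connection would deform under the Rees family to a destabilizing sub-system of Hodge bundles---so $M_{\rm DR}$ is smooth at $(V,\nabla )$ with the same dimension. Hence the fibers of $\Ee$ at $\lambda =0$ and $\lambda =1$ have equal dimension, and $\Gm$-invariance together with upper semicontinuity forces constant fiber rank across $\aaa ^1$, so $\Ee$ is locally free. By the Rees equivalence between $\Gm$-equivariant locally free sheaves on $\aaa ^1$ and finite-dimensional filtered vector spaces, the filtration on the fiber at $\lambda =1$ recovered from this global extension coincides with the spectral-sequence filtration $F^{\cdot}Def(V,\nabla )$ coming from the filtered complex $(End(V)\otimes \Omega ^{\cdot}_X, F^{\cdot})$ (both select exactly the weight-$\geq p$ extendable sections). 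Its associated graded is the $\Gm$-weight decomposition of the fiber at $\lambda =0$; this simultaneously yields $E_1$-degeneration and the formula \eqref{grdecomp}.

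For the geometric interpretation I would appeal to Bialynicki-Birula at $p\in P_{\alpha}\subset M_{\rm Hod}$, smooth by stability. The attracting cell $G_{\alpha}^{\rm Hod}$ of $P_{\alpha}$ in $M_{\rm Hod}$ is smooth at $p$ with tangent space equal to the sum of non-negative $\Gm$-weight subspaces of $T_pM_{\rm Hod}=T_pM_{\rm H}\oplus T_0\aaa ^1$ (the $\aaa ^1$-direction having weight $1$). Its fiber over $\lambda =1$ is $G_{\alpha}\subset M_{\rm DR}$, so transport along the Rees flow identifies $T_{(V,\nabla )}G_{\alpha}$ with the sum of non-negative weight subspaces of $T_pM_{\rm H}$, which by the previous paragraph is $F^0Def(V,\nabla )$. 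Further restricting to the vertical fiber of $G_{\alpha}\to P_{\alpha}$ discards the weight-$0$ piece $T_pP_{\alpha}$ and isolates the strictly positive weight subspaces, so $T(G_{\alpha}/P_{\alpha})_{(V,\nabla )}$ corresponds to $F^1Def(V,\nabla )$. The main obstacle is this last compatibility: verifying that the filtration produced by the Rees equivalence in paragraph two matches the Bialynicki-Birula weight decomposition transported along the $\Gm$-flow from $p$ to $(V,\nabla )$. This should follow formally from the universal property of the Rees construction together with the definition of the $\Gm$-action on $M_{\rm Hod}$, but it is the step where careful bookkeeping of weights versus filtration indices is required.
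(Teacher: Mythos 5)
For the degeneration statement and formula \eqref{grdecomp}, your Rees--family argument is sound and is essentially an equivalent repackaging of the paper's proof: the paper checks that stability forces $\hhh^0$ and $\hhh^2$ of the graded complex to be $\cc$, equal to their values in the abutment, and then invokes constancy of the Euler characteristic to force equality in degree one, hence degeneration. Your route gets the same conclusion by showing the relative hypercohomology of the Rees complex is locally free, but the input is identical: you need stability at $\lambda=0$ (and the resulting irreducibility of $(V,\nabla)$, which you correctly deduce) to know the fiberwise $\hhh^0$ and $\hhh^2$ are one-dimensional, hence that the fiberwise $\hhh^1$ has constant dimension. One bookkeeping point: semicontinuity and $\Gm$-invariance apply to the fiberwise hypercohomology $\hhh^1$ of the restricted complexes, not directly to the fibers $\Ee\otimes k(\lambda)$ of $R^1\pi_*$; you should phrase the constancy fiberwise and then invoke Grauert to get local freeness \emph{and} base change (which you need even to say the fiber of $\Ee$ at $\lambda=0$ is $Def(Gr_F(V),\theta)$). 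With that adjustment, the identification of the induced filtration at $\lambda=1$ with the spectral-sequence filtration, and of the fiber at $\lambda=0$ with the associated graded, is standard and gives \eqref{grdecomp}.

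For the geometric interpretation there is a genuine gap, which you yourself flag. The Bialynicki--Birula weight decomposition lives at the fixed point $p$, while $(V,\nabla)$ lies only in the attracting set, not in the $\Gm$-orbit of $p$; there is no group element transporting $T_p M_{\rm Hod}$ to $T_{(V,\nabla)}M_{\rm DR}$. Concretely, $T(G^{\rm Hod}_\alpha/\aaa^1)$ restricted along the Rees section is a $\Gm$-equivariant subbundle of the restriction of $T(M_{\rm Hod}/\aaa^1)$, and such an equivariant subbundle over $\aaa^1$ is \emph{not} determined by its fiber at $\lambda=0$: in filtered-vector-space terms, a subspace of $Gr_F(W)$ has many strict filtered lifts to subspaces of $W$. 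So knowing that the fiber at $0$ is the non-negative weight part does not yet identify $T_{(V,\nabla)}G_\alpha$ inside $Def(V,\nabla)$ as $F^0$. The missing ingredient is exactly what the paper's one-line proof refers to as the cocycle description: using the degeneration to represent a class in $F^0\hhh^1$ (resp. $F^1\hhh^1$) by a cocycle preserving the Griffiths-transverse filtration (resp. preserving it and inducing the trivial deformation of $(Gr_F(V),\theta)$), so that the corresponding first-order deformation visibly stays in $G_\alpha$ (resp. in the fiber $L_p$); this gives the containments $F^0\subset T(G_\alpha)$ and $F^1\subset T(G_\alpha/P_\alpha)$, and equality then follows from the dimension count that your BB input does correctly supply. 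As written, the ``transport along the flow'' step does not substitute for this verification.
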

\begin{proof}
Since $(Gr_F(V),\theta )$ is stable,
$$
\hhh ^0(Gr_F(End(V)\otimes \Omega ^{\cdot}_X))=\cc , \;\;\;
\hhh ^2(Gr_F(End(V)\otimes \Omega ^{\cdot}_X)) = \cc
$$
and these are the same as at the limit of the spectral sequence. By invariance of the Euler characteristic
the same must be true for $\hhh ^1$ so the spectral sequence degenerates. The geometric interpretation may be
seen by looking at a cocycle description. 
\end{proof}

\subsection{At a variation of Hodge structure}
\label{hodgedef}

Suppose $(V,\nabla ) \in G^{\rm VHS}_{\alpha}$ is an irreducible complex variation of Hodge structure.
Then $End(V)$ is a
real VHS of weight $0$, independent of the scalar choice of polarization on $V$. The hypercohomology $Def(V,\nabla )= \hhh ^1(End(V)\otimes \Omega ^{\cdot}_X)$
is a real Hodge structure of weight $1$, with the same Hodge filtration as defined above and which we write as
$$
H_{\rr}\subset Def(V,\nabla )\cong \bigoplus \hhh ^{k,1-k}.
$$
This decomposition splits the Hodge filtration and is naturally isomorphic to the decomposition \eqref{grdecomp}.  
The
real subspace $H_{\rr}$ is the space of deformations of the monodromy representation preserving the polarization form, i.e. as a representation in
the real group $U(p,q)$. In other words it is the tangent space to $M_B(X,r)_{\rr}$ in the notations of Lemma \ref{lemGMBR}.

\noindent
{\em Proof of Lemma \ref{lemGMBR}:}
Both $G^{\rm VHS}_{\alpha}$ and $G_{\alpha} \cap M_B(X,r)_{\rr}$  have the same tangent spaces at $(V,\nabla )$.  
The tangent space of $G_{\alpha}$ is the sum $F^0=\bigoplus_{k\geq 0} \hhh ^{k,1-k}$. The intersection of $F^0$ with
$H_{\rr}$ is $(\hhh ^{1,0}\oplus \hhh ^{0,1})_{\rr}$ which is the tangent space of $G^{\rm VHS}_{\alpha}$.
\eop

The symmetry of Hodge numbers extends to other points too: 

\begin{corollary}
\label{symmetry}
For any gr-stable point $(V,\nabla )$, not necessarily a VHS, let $F^{\cdot}$ be the unique (up to shift) partial oper structure. Then
the induced filtration on the tangent space of the moduli stack at $(V,\nabla )$ satisfies
$$
\dim Gr_F^p Def(V,\nabla ) = \dim Gr_F^{1-p} Def(V,\nabla ).
$$
\end{corollary}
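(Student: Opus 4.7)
The plan is to deduce the symmetry from Serre duality applied to the two-term complex identified in Lemma \ref{degen}. Since $(V,\nabla)$ is gr-stable, the limiting Higgs bundle $(Gr_F(V),\theta)$ is stable, so Lemma \ref{degen} applies and gives
$$
Gr^p_F Def(V,\nabla) \;=\; \hhh^1(K^p), \qquad K^p := \bigl[\,Gr^p_F End(V) \xrightarrow{\;\theta\;} Gr^{p-1}_F End(V)\otimes \Omega^1_X\,\bigr],
$$
with the first factor in cohomological degree $0$. Thus the goal reduces to proving $\dim \hhh^1(K^p)=\dim \hhh^1(K^{1-p})$.

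The key input is the trace pairing on $End(V)$. It is perfect and, since it pairs $Hom(E^q,E^{q+p})$ with $Hom(E^{q+p},E^q)$, it respects the Hodge grading in the form $(Gr^p_F End(V))^{\ast} \cong Gr^{-p}_F End(V)$. Under the induced identification, the Higgs field $\theta$ on $End(Gr_F V)$ is (up to sign) self-adjoint: the transpose of $\theta: Gr^p_F End(V)\to Gr^{p-1}_F End(V)\otimes\Omega^1_X$ twisted by $\Omega^1_X$ becomes $\theta: Gr^{1-p}_F End(V)\to Gr^{-p}_F End(V)\otimes\Omega^1_X$. In other words, the Serre/Verdier dual complex $\mathbb{R}\mathcal{H}om(K^p,\omega_X)$, reshifted into degrees $0,1$, is canonically isomorphic to $K^{1-p}$.

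Now apply Grothendieck-Serre duality on the curve $X$ to the two-term complex $K^p$ of locally free sheaves. It yields, for each $i$,
$$
\hhh^i(K^p)^{\ast} \;\cong\; \hhh^{2-i}(K^{1-p}).
$$
Taking $i=1$ gives $\hhh^1(K^p)^{\ast}\cong \hhh^1(K^{1-p})$, and hence
$$
\dim Gr^p_F Def(V,\nabla) \;=\; \dim \hhh^1(K^p) \;=\; \dim \hhh^1(K^{1-p}) \;=\; \dim Gr^{1-p}_F Def(V,\nabla),
$$
which is the claimed symmetry.

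The only delicate point is step two, namely the verification that the self-duality of $K^p$ as a Higgs-type complex really pairs $K^p$ with $K^{1-p}$ with the correct twist. This is a bookkeeping exercise with the trace pairing combined with Serre duality's twist by $\Omega^1_X$; the shift from $p$ to $1-p$ (rather than $-p$) is precisely the $\Omega^1_X$ appearing in $\omega_X$. I expect this identification of dual complexes to be the main technical step, but it is a local computation on each graded piece and presents no substantive difficulty beyond sign conventions.
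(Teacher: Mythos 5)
Your proof is correct, but it takes a genuinely different route from the paper's. The paper argues transcendentally: since $(Gr_F(V),\theta )$ is a stable system of Hodge bundles, it underlies a complex variation of Hodge structure $(V',\nabla ')$ via the nonabelian Hodge correspondence; Lemma \ref{degen} identifies $Gr_F^p Def(V,\nabla )$ with $Gr_F^p Def(V',\nabla ')$ because both are computed from the same graded complex, and at the VHS point $End(V')$ is a real VHS of weight $0$, so $\hhh ^1(End(V')\otimes \Omega ^{\cdot}_X)$ carries a real Hodge structure of weight one and complex conjugation forces $h^{p,1-p}=h^{1-p,p}$. You instead stay entirely inside coherent cohomology: the trace pairing gives $(Gr^p_F End(V))^{\ast}\cong Gr^{-p}_F End(V)$, the adjoint of $[\theta ,-]$ is $-[\theta ,-]$, so your two-term complex $K^p$ is Grothendieck--Serre dual to $K^{1-p}$ (the twist by $\omega _X=\Omega ^1_X$ accounting exactly for the shift from $-p$ to $1-p$), whence $\hhh ^1(K^p)^{\ast}\cong \hhh ^1(K^{1-p})$; gr-stability enters only through Lemma \ref{degen}, just as in the paper. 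Your version is purely algebraic --- no harmonic metrics or real structures --- and in fact yields slightly more than the dimension equality, namely a perfect pairing between $Gr_F^p$ and $Gr_F^{1-p}$, which is the graded shadow of the cup-product symplectic form exploited later in Lemma \ref{lagrange}; the paper's version is shorter given the correspondence already in place and fits the symmetry into the weight-one real Hodge structure picture of \S \ref{hodgedef}. The one point to state carefully in your write-up is the sign/self-adjointness check for $[\theta ,-]$ under the trace pairing, which you correctly flag as routine.
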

\begin{proof} 
Let $(V',\nabla ')$ denote the VHS corresponding to the system of Hodge bundles $(Gr_F(V),\theta )$. 
Note that $(Gr_F(V') , \theta ')\cong (Gr_F(V),\theta )$,
so
$$
Gr_F^p Def(V,\nabla ) \cong \hhh ^1(End(Gr_F(V))\otimes \Omega ^{\cdot}_X) \cong Gr_F^pDef(V',\nabla ').
$$
Therefore we can write
$Gr_F^p Def(V,\nabla ) \cong \hhh ^{p,1-p} (End(V')\otimes \Omega ^{\cdot}_X)$, and these spaces
are associated to a real Hodge structure of weight one, so we get the claimed symmetry of dimensions. 
\end{proof}

In the Hitchin moduli space the tangent space at a point $(E,\theta )\in P_{\alpha}$ decomposes as a direct sum under
the $\Gm$-action, and this decomposition is compatible with the filtration, so it gives a splitting:
$$
T(\Mm _{\rm H})_{(E,\theta )} \cong \bigoplus Gr_F^p T(\Mm _{\rm H})_{(E,\theta )}\cong \bigoplus \hhh ^{p, 1-p}.
$$
The tangent space to the fixed point set is
$Gr_F^0 T(\Mm _{\rm H})_{(E,\theta )}= T(P_{\alpha})_{(E,\theta )}$,
whereas the pieces $p<0$ may be identified as the ``outgoing'' directions for the $\Gm$-action.

If $P_{\alpha}= P_0$ is the lowest piece corresponding to the open stratum $G_0\subset M$ then there are no outgoing directions,
so the terms of Hodge type $(p,1-p)$ vanish for $p<0$. By symmetry they vanish for $p>1$, which says that
$Gr_F^p T(\Mm _{\rm H})_{(E,\theta )} = 0$ unless $p=0,1$; the same holds for $\Mm _{\rm DR}$.
Furthermore, the two pieces are dual by the symplectic form (see below), so
we can identify
$$
\hhh ^{0,1} = Gr ^0_F T(\Mm _{\rm H})_{(E,\theta )}\cong T(P_0)_{(E,\theta )} , 
$$
$$
\hhh ^{1,0} = Gr ^1_F T(\Mm _{\rm H})_{(E,\theta )}\cong T^{\ast}(P_0)_{(E,\theta )} . 
$$
We even have an isomorphism of fibrations
$$
\begin{array}{ccc}
\widetilde{G}_0 & & T^{\ast}P_0 \\
\downarrow & \cong & \downarrow \\
P_0 & & P_0 ,
\end{array}
$$
whereas $G_0\rightarrow P_0$ is a twisted form of the same fibration (it is a torsor). These observations are elementary and classical when $X$ is
a smooth projective curve, however they also hold in the parabolic or orbifold case when the weights are generic so that all points are $gr$-stable. 

This includes cases where there are no stable parabolic vector bundles, but the lowest  stratum $P_0$ corresponds to variations of Hodge structure 
of nontrivial Hodge type.

\subsection{Lagrangian property for the fibers of the projections $G_{\alpha}\rightarrow P_{\alpha}$}
\label{sec-lagrange}

There is a  natural symplectic form  on the tangent bundle to the moduli space over the open subset of stable points,
given in cohomological terms as the cup product
$\cup :\hhh ^1 \otimes \hhh ^1\rightarrow \hhh ^2 = \cc$. 

In the moduli space of connections on a smooth projective curve, the open stratum $G_0$
fibers over the moduli space $P_0$
of semistable vector bundles. In the Hitchin moduli space, under the isomorphism  $\widetilde{G}_0\cong T^{\ast}P_0$, 
the symplectic form is equal to the standard one \cite{BiswasRamanan},
in particular the fibers are lagrangian.  

It has been noticed by several authors \cite{Hain} \cite{Bottacin} \cite{Szabo} \cite{Lekaus} \cite{Aidan} that
the fibers of the projection $G_0\rightarrow P_0$ (over stable points)
are similarly lagrangian subspaces of the moduli space of connections.   

We point out that the lagrangian property extends to the fibers in all strata,
at gr-stable points. 

\begin{lemma}
\label{lagrange}
Suppose $p=(E,\theta )\in P_{\alpha}$ is a stable system of Hodge bundles. Then the fiber
$L_p$ of the projection $G_{\alpha}\rightarrow P_{\alpha}$ over $p$ is a lagrangian subspace of $M_{\rm DR}$.
\end{lemma}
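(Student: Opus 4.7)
The plan is to combine Lemma \ref{degen}, which identifies $T(L_p)_{(V,\nabla)}$ with $F^1 Def(V,\nabla)$, together with Corollary \ref{symmetry} for the dimension count and the filtered structure of the cup product for the isotropy. Since $p=(E,\theta)$ is stable, at every $(V,\nabla)\in L_p$ the partial oper structure is unique up to shift (Proposition \ref{filtunique}), the lemma on degeneration applies, and $T(L_p)_{(V,\nabla)}=F^1 Def(V,\nabla)$.

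For the dimension count, by Corollary \ref{symmetry} the bijection $p\mapsto 1-p$ of $\zz$ exchanges $\{p\geq 1\}$ with $\{p\leq 0\}$ while preserving $\dim Gr^p_F Def(V,\nabla)$. Hence
$$
\dim F^1 Def(V,\nabla) = \sum_{p\geq 1}\dim Gr^p_F = \sum_{p\leq 0}\dim Gr^p_F,
$$
and the two sums together make up all of $Def(V,\nabla)$, so $\dim T(L_p)_{(V,\nabla)} = \tfrac{1}{2}\dim M_{\rm DR}$.

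For isotropy, I would argue that the symplectic form, given by cup product composed with trace, is strictly filtered. The trace-composition pairing
$$
(End(V)\otimes \Omega^{\cdot}_X)\otimes (End(V)\otimes \Omega^{\cdot}_X) \longrightarrow \Omega^{\cdot}_X
$$
carries $F^p\otimes F^q$ into $F^{p+q}\Omega^{\cdot}_X$ with respect to the stupid Hodge filtration on the target ($\Oo_X$ in $F^0$, $\Omega^1_X$ in $F^1$, zero in $F^{\geq 2}$): indeed the cup product on the endomorphism bundle respects $F^\cdot$ since composing an endomorphism that shifts by $\geq p$ with one that shifts by $\geq q$ produces one shifting by $\geq p+q$, and once $p+q\geq 2$ the resulting endomorphism lives in $F^1 End(V)$, on which the trace vanishes. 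Passing to hypercohomology gives
$$
\cup : F^1 Def(V,\nabla)\otimes F^1 Def(V,\nabla)\longrightarrow F^2 \hhh^2(\Omega^{\cdot}_X)= F^2 H^2_{dR}(X,\cc)=0,
$$
so $F^1 Def(V,\nabla)$ is isotropic. Combined with the dimension computation, this is the Lagrangian property.

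The main obstacle I anticipate is the careful bookkeeping of the filtration shifts: one must verify at the level of complexes (rather than only on $E_1$ of the spectral sequence) that the cup product really carries $F^p\otimes F^q$ into $F^{p+q}$ of the target de Rham complex, and that trace kills $F^1 End(V)$ strictly. Everything else (dimension count via Corollary \ref{symmetry}, identification of the tangent space via Lemma \ref{degen}) is then a formal consequence.
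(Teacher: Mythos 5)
Your proposal is correct and follows essentially the same route as the paper: Lemma \ref{degen} identifies the tangent space to the fiber with $F^1\hhh ^1\subset Def(V,\nabla )$, Corollary \ref{symmetry} gives the half-dimension count, and compatibility of the filtration with the cup product gives isotropy because $F^2$ of the target vanishes. Your explicit check that the trace pairing carries $F^p\otimes F^q$ into $F^{p+q}$ of the de Rham complex (using that the trace kills $F^1End(V)$) is simply a spelled-out version of the paper's one-line assertion that the Hodge filtration is compatible with cup product together with $F^2\hhh ^2=0$.
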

\begin{proof}
Fix a point $(V,\nabla )$ which is gr-stable, hence also stable itself. 
By Lemma \ref{degen}, 
the tangent space to the fiber of $G_{\alpha}/ P_{\alpha}$ is the subspace $F^1 \hhh ^1 \subset Def(V,\nabla )$. 

In degree two, $F^1\hhh ^2 (End(V)\otimes \Omega ^{\cdot}_X )$ is the full hypercohomology space $\hhh ^2 = \cc $
whereas $F^2\hhh ^2 = 0$. On the other hand, the Hodge filtration is compatible with cup product,
so 
$$
\cup : F^1\hhh ^1 \rightarrow F^1\hhh ^1 \rightarrow F^2\hhh ^2 = 0,
$$
which is to say that $F^1\hhh ^1 \subset Def (V,\nabla )$ is an isotropic subspace for the symplectic form.
The symmetry property Corollary \ref{symmetry} readily implies that
$F^1\hhh ^1$ has half the dimension, so it is lagrangian. 
\end{proof}

A natural problem is to understand the relationship between the lagrangian fibers on different strata.

\begin{question}
\label{foliation}
Do the lagrangian fibers of the projections fit together into a smooth lagrangian foliation with closed leaves?
\end{question}

This is of course true within a given stratum $G_{\alpha}$; does it remain true as $\alpha$ varies?

A heuristic argument for why this might be the case is that if a fiber of $G_{\alpha}\rightarrow P_{\alpha}$
were  not closed in $M_{\rm DR}$, this might lead to a projective curve in $M_{\rm DR}$ which cannot exist since
$M_{\rm B}$ is affine.   

A similar closedness or properness property was proven in \cite{Szabo} for the case of Fuchsian equations. 

On the other hand,
the analogous statement for $M_{\rm H}$ is not true: the fibers of $\widetilde{G}_{\alpha}\rightarrow P_{\alpha}$ definitely do sometimes have nontrivial
closures in $M_{\rm H}$, indeed any fiber contained in the compact nilpotent cone will be non-closed. 

If the answer to Question \ref{foliation} is affirmative, it could be useful for the context of geometric Langlands, where the full moduli stack
$Bun_{GL(r)}$ might profitably be replaced by the algebraic space of leaves of the foliation. They share the open set of semistable bundles. 
This could help in the ramified case \cite{GukovWitten}---when there are no semistable bundles, it would seem logical to consider the lowest stratum $P_0$
parametrizing variations of Hodge structure.

\subsection{Nestedness of the stratifications}
\label{nestedness}

We are given a stratification 
or disjoint decomposition into locally closed subsets
$M = \coprod _{\alpha} G_{\alpha}$.
Say that it is {\em nested} if there is a partial order on the index set such that
$$
\overline{G}_{\alpha} - G_{\alpha} = \coprod _{\beta < \alpha} G_{\beta}.
$$
If this is the case, the partial order is defined by the condition 
$$
\beta \leq \alpha \Leftrightarrow G_{\beta}\subset \overline{G}_{\alpha}.
$$
The {\em arrangement} of the strata is the partially ordered index set.

\begin{conjecture}
\label{nested}
The stratifications of $M_{\rm DR}(X,r)$ and $M_{\rm H}(X,r)$ defined in Proposition \ref{strata} are nested,
and the arrangements of their strata are the same via the identification of each stratum with
the corresponding fixed point set in $M_{\rm H}(X,r)$.
\end{conjecture}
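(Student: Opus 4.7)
The natural framework is to use the Hodge filtration space $M_{\rm Hod}$ as a common scaffold on which both stratifications are induced by a single $\Gm$-limit map $\Phi\colon M_{\rm Hod}\to \bigsqcup_{\alpha}P_{\alpha}$ (Lemma \ref{limitlem}), with $G_{\alpha}=\Phi^{-1}(P_{\alpha})\cap M_{\rm DR}(X,r)$ and $\widetilde{G}_{\alpha}=\Phi^{-1}(P_{\alpha})\cap M_{\rm H}(X,r)$. I would fix the intrinsic partial order $\beta\leq\alpha \Leftrightarrow P_{\beta}\subset \overline{P_{\alpha}}$ on the index set, defined inside the fixed-point locus of $M_{\rm H}$, and then try to prove that this is the closure order for both stratifications. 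The agreement-of-arrangements half of the conjecture then comes for free, since both closure orders will have been identified with the same intrinsic order on $\{P_{\alpha}\}$.

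The Higgs side I would treat first, using Bialynicki--Birula theory with teeth supplied by the Hitchin map $h\colon M_{\rm H}\to \bigoplus_{i}H^{0}(X,K^{\otimes i})$. Since $h$ is proper and $\Gm$-equivariant with strictly positive weights on the target, the preimages of $\Gm$-invariant compact subsets of the Hitchin base are projective $\Gm$-invariant subvarieties of $M_{\rm H}$ on which classical BB applies directly; letting such subsets exhaust the base assembles the global statement $\widetilde{G}_{\beta}\subset \overline{\widetilde{G}_{\alpha}}\Leftrightarrow P_{\beta}\subset \overline{P_{\alpha}}$. Concretely, for a specializing sequence $y_{n}\in \widetilde{G}_{\alpha}$ with $y_{n}\to z\in \widetilde{G}_{\beta}$, a subsequence of the orbit closures $\overline{\Gm\cdot y_{n}}$ lies in one fixed proper piece of $h^{-1}$, and the flat limit carries $z$ together with a limit of $\Phi(y_{n})\in P_{\alpha}$, which by uniqueness of limits must equal $\Phi(z)\in P_{\beta}$ and hence lie in $\overline{P_{\alpha}}$.

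The de Rham side is the main obstacle, because $M_{\rm DR}$ admits no proper map analogous to the Hitchin fibration. The strategy I would pursue is to keep working inside $M_{\rm Hod}$ and use the Rees-construction family from the proof of Lemma \ref{limitlem}: each $y\in M_{\rm DR}$ produces a $\Gm$-equivariant morphism $\sigma_{y}\colon \aaa^{1}\to M_{\rm Hod}$ with $\sigma_{y}(1)=y$ and $\sigma_{y}(0)=\Phi(y)$. For a specialization $y_{n}\to z$ in $M_{\rm DR}$, one wants the sections $\sigma_{y_{n}}$ to specialize to $\sigma_{z}$ as a flat family over $\aaa^{1}$; evaluation at $\lambda=0$ then forces $\Phi(y_{n})\to \Phi(z)$, so $\Phi(z)\in \overline{P_{\alpha}}$. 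Making this rigorous requires a $\Gm$-equivariant relative compactification of $M_{\rm Hod}\to \aaa^{1}$ that is proper near $\lambda=0$, so that flat limits of the $\sigma_{y_{n}}$ exist and behave correctly; candidate constructions include Simpson's Hodge-theoretic enlargement of $M_{\rm Hod}$ and variants from the theory of tame filtered Higgs bundles. This relative compactification step is precisely the point where \cite{hfnac} stops short, and it is the heart of the conjecture: absent a proper algebraic mechanism to carry closures from $M_{\rm DR}$ down to $\bigsqcup P_{\alpha}$, there is a real danger that the constructible limit map $\Phi$ could fail to respect specialization on $M_{\rm DR}$. Once the compactification is in place the argument is formally symmetric to the Higgs case, yielding both nestedness of the $G_{\alpha}$ and the coincidence of the two arrangements with the single order on $\{P_{\alpha}\}$.
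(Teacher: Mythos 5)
First, note that the statement you are proving is stated in the paper as a conjecture; the paper only establishes it for rank $2$ (Theorem \ref{nested2}), and it does so by a constructive deformation-theoretic argument, not by the compactification strategy you outline. Your proposal has a structural flaw before one even reaches the step you flag as missing: the intrinsic partial order you fix, $\beta\leq\alpha \Leftrightarrow P_{\beta}\subset\overline{P_{\alpha}}$, is trivial. The $\Gm$-fixed locus in $M_{\rm H}$ is closed, so its connected components $P_{\alpha}$ are closed and pairwise disjoint, and $P_{\beta}\subset\overline{P_{\alpha}}$ forces $\beta=\alpha$. In particular your claimed equivalence $\widetilde{G}_{\beta}\subset\overline{\widetilde{G}_{\alpha}}\Leftrightarrow P_{\beta}\subset\overline{P_{\alpha}}$ is false already in rank $2$, where the conjecture (and the density of $\widetilde{G}_{0}$) predicts $\widetilde{G}_{e}\subset\overline{\widetilde{G}_{0}}$ while $P_{e}\not\subset P_{0}$. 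So the scaffolding that was supposed to deliver ``agreement of arrangements for free'' does not exist; the order in question must be the closure order of the strata themselves, and comparing the de Rham and Higgs closure orders is part of what has to be proved. (A smaller gap of the same kind: from $\Phi(z)\in P_{\beta}\cap\overline{P_{\alpha}}\neq\emptyset$ you infer $P_{\beta}\subset\overline{P_{\alpha}}$, which does not follow.)

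More importantly, even if one repairs the order and grants the $\Gm$-equivariant relative compactification of $M_{\rm Hod}$ that you yourself identify as missing, your argument only addresses the easy direction: that limits of points of a stratum land in ``lower'' strata, i.e.\ that $\overline{G_{\alpha}}$ is contained in a union of strata compatible with specialization of the limit map. The substance of nestedness is the reverse containment $\coprod_{\beta<\alpha}G_{\beta}\subset\overline{G_{\alpha}}$: every point of a smaller stratum must be exhibited as a limit of points of the bigger one. No properness statement (Hitchin map, BB theory, or a compactified $M_{\rm Hod}$) produces such deformations out of a smaller stratum. This is precisely what the paper's rank-$2$ proof supplies: starting from $(V,\nabla)\in G_{e}$ with destabilizing line subbundle $E^{1}$, it sets $L=E^{1}(-p)$ and studies the deformation theory of the quadruple $(V,\nabla,L,\varphi)$, using gr-stability to degenerate the spectral sequence of the filtered complex $End(V)\otimes\Omega^{\cdot}_{X}$ and explicit $H^{1}$ computations to show both unobstructedness and the existence of a deformation under which the subbundle $E^{1}\subset V$ does not survive, so the nearby connections lie in $G_{e-1}$. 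Your proposal contains no mechanism of this kind, so it could at best prove that closures of strata are unions of strata, not the conjecture as stated; and on the de Rham side even that weaker statement is left conditional on the unconstructed compactification.
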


The arrangement of the strata in the Hitchin moduli space has been studied in \cite{BiswasGomez}, and for rank $3$ the
connected components of the fixed point sets have been classified in \cite{GPGM}. 

\begin{theorem}
\label{nested2}
Conjecture \ref{nested} is true for bundles of rank $r=2$ on a smooth projective curve
$X$ of genus $g\geq 2$.
\end{theorem}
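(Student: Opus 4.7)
The plan is to exploit the very restricted combinatorics of the rank $2$ stratification. By Theorem \ref{convergence}, the destabilizing iteration terminates after at most one step in rank $2$: if $(V,\nabla)$ has underlying bundle $V$ not semistable, the unique maximal destabilizing line subbundle $L\subset V$ has degree $d\geq 1$, and $L$ cannot be $\nabla$-invariant (a flat line bundle has degree zero), so the second fundamental form $\theta:L\to(V/L)\otimes\Omega^1_X$ is nonzero, and the filtration $0\subset L\subset V$ is already gr-semistable. Nonvanishing of $\theta$, together with $\deg(L^{-1}(V/L)\otimes\Omega^1_X)=2g-2-2d$, gives the bound $d\leq g-1$. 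Thus the strata are $G_0,G_1,\ldots,G_{g-1}$ with $G_d$ the locus of $(V,\nabla)$ whose underlying bundle has maximal destabilizing subbundle of degree $d$; the analogous analysis in $M_{\rm H}(X,2)$ yields $\widetilde G_0,\ldots,\widetilde G_{g-1}$, and both sets of strata are indexed by the common components $P_0,\ldots,P_{g-1}$.

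Setting $Z_d:=G_d\cup G_{d+1}\cup\cdots\cup G_{g-1}$ (and $Z_0=M_{\rm DR}(X,2)$), the characterization above identifies $Z_d$ with the locus of $(V,\nabla)$ whose underlying bundle admits a line subbundle of degree $\geq d$. Hence $Z_d$ is closed by upper semicontinuity of the Harder--Narasimhan polygon in algebraic families of coherent sheaves. This gives $\overline{G_d}\subset Z_d$, and the analogous closedness $\overline{\widetilde G_d}\subset \widetilde Z_d$ holds on the Higgs side.

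For the reverse inclusion I would show $G_d$ is dense in $Z_d$, by arguing that $Z_d$ is irreducible and $G_d=Z_d\setminus Z_{d+1}$ is a nonempty Zariski-open subset. The main obstacle is the irreducibility: one reduces to showing that the closed Harder--Narasimhan stratum in the moduli stack of rank $2$ degree $0$ bundles is irreducible (it is dominated by the irreducible universal family of non-split extensions $0\to L\to V\to (\det V)\otimes L^{-1}\to 0$ with $\deg L\geq d$), and that the forgetful map from $M_{\rm DR}(X,2)$ has connected affine fibers --- torsors under $H^0(End(V)\otimes\Omega^1_X)$ --- which requires handling the jump of fiber dimension on the reducible locus (of positive codimension in each $Z_d$). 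Non-emptiness of each $G_d$ is elementary: $G_{g-1}$ contains the classical opers, and for $0\leq d<g-1$ one constructs explicit $(V,\nabla)$ by taking nontrivial extensions of line bundles equipped with a compatible connection. The identical argument on the Higgs side, using polystable Higgs bundles in place of irreducible connections, gives $\overline{\widetilde G_d}=\widetilde Z_d$. Finally, both arrangements are the total order $G_{g-1}<\cdots<G_0$ matched through the common $P_d$, so the two stratifications have the same arrangement.
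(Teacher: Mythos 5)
Your reduction of the rank two combinatorics is correct and matches the paper's setup: one destabilizing step suffices, the strata are $G_0,\ldots ,G_{g-1}$ with $G_d$ the locus where the underlying bundle has Harder--Narasimhan subbundle of degree exactly $d$, and $Z_d=\bigcup_{e\geq d}G_e$ is closed. (Incidentally, for $d\geq 1$ there are no reducible connections in $Z_d$: a connection reducible by a flat line subbundle has semistable underlying bundle, so the fiber-dimension jumps you worry about occur at irreducible connections over special bundles, not on a reducible locus.) The genuine gap is precisely the step you flag, irreducibility of $Z_d$, and the justification you sketch does not deliver it. The family of non-split extensions $0\to L\to V\to (\det V)\otimes L^{-1}\to 0$ with $\deg L\geq d$ is a disjoint union, over $e=\deg L$, of irreducible pieces of different dimensions, so it is not an irreducible dominating family; and you cannot restrict to the piece $\deg L=d$, because a bundle in the Harder--Narasimhan stratum of degree $e>d$ has \emph{no} saturated line subbundle of degree $d$ at all (any line subbundle other than the HN subbundle maps nontrivially to the degree $-e$ quotient, hence has degree $\leq -e$; twisting the HN subbundle down by an effective divisor gives only a non-saturated subsheaf, i.e.\ an extension with torsion quotient, outside your family). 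Thus proving that the ``$\deg L\geq d$'' locus with connections is irreducible already requires knowing that the deeper strata lie in the closure of $G_d$ --- which is the statement to be proved, so the argument is circular at its crucial point.

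The paper closes exactly this gap by a pointwise deformation-theoretic argument instead of a global irreducibility argument: at $(V,\nabla )\in G_e$ with destabilizing subbundle $E^1$ of degree $e$, it sets $L:=E^1(-p)$ for a point $p\in X$ and studies deformations of the quadruple $(V,\nabla ,L,\varphi )$, controlled by the complex $C^{\cdot}$. Using gr-stability and the degeneration of the spectral sequence \eqref{specseq}, it shows that obstructions vanish and that the map \eqref{themap} is surjective with a kernel element mapping to something nonzero in $H^1(Hom(E^1,E^0))$; hence there is a deformation of $(V,\nabla ,L,\varphi )$ along which $E^1\subset V$ does not deform, and in the deformed objects $L_t$, of degree $e-1$, becomes the destabilizing subbundle. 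This produces a one-parameter family in $G_{e-1}$ specializing to $(V,\nabla )$, giving $G_e\subset \overline{G_{e-1}}$ for $e\geq 2$ (the case $e=1$ being automatic since $G_0$ is open and dense). To repair your route you would need an input of comparable strength, e.g.\ an explicit construction of families of connections crossing from HN type $e$ to $e-1$, which is what the deformation computation provides.
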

\begin{proof}
Hitchin \cite{Hitchin} identifies explicitly 
the connected components $P_e$ of the fixed point set $M_{\rm H}(X,2)^{\Gm}$.
These are indexed by an integer $0\leq e \leq g-1$, where $P_0$ is the space of semistable
vector bundles and  for $e>1$, $P_e$ is the space of systems of Hodge bundles which are direct sums of 
line bundles of degrees $e$ and $-e$.  A deformation theory argument will show that starting with
a point in $P_e$ we can deform it to a family of $\lambda$-connections which project
into the next stratum $P_{e-1}$.

At $e=0$, the space $P_0$ is
the moduli space of rank two semistable vector bundles on $X$, which is sometimes denoted
$\Uu _X(2)$, and is known to be an irreducible variety. 

For $e>0$ the space $P_e$ parametrizes Higgs bundles of the form
$$
E=E^0\oplus E^1 , \;\;\;\; \theta : E^1\rightarrow E^0\otimes \Omega ^1_X,
$$
where $E^0$ and $E^1$ are line bundles of degrees $-e$ and $e$ 
respectively. 
We require $e>0$ because a Higgs bundle of the above form with $deg(E^i)=e=0$ would be semistable but not stable,
in the same $S$-equivalence class as the polystable vector bundle
$E^0\oplus E^1$ which is a point in $P_0$.  

The map $\theta$ is a section of the line bundle $(E^1)^{\ast}\otimes E^0\otimes \Omega ^1_X$
of degree $2g-2-2e$, so $e\leq g-1$ and in the case of equality, $\theta$ is an isomorphism.
Let $D$ be the divisor of $\theta$; it is an effective divisor of degree $2g-2-2e$, and the  space
of such is $Sym ^{2g-2-2e}(X)$. Given $D$ and $E^1 \in Pic ^e(X)$ then  $E$ is
determined by $E^0= E^1\otimes TX \otimes \Oo _X(D)$. Thus,
$P_e \cong Sym ^{2g-2-2e}(X)\times Pic^e(X)$
which is a smooth irreducible variety. All points are stable. 

The strata of the oper stratification are enumerated as $G_e\subset M_{\rm DR}(X,2)$ for $0\leq e \leq g-1$; the
corresponding strata of $M_{\rm H}(X,2)$ are denoted $\widetilde{G}_e$. The upper (smallest) stratum $e=g-1$ is the case
of classical opers. 

To show the nested  property, it suffices to show that $G_e \subset \overline{G_{e-1}}$ for $e\geq 2$.
Note that for $e=1$ this is automatic since $G_0$ is an open dense subset so $\overline{G_0} = M_{\rm DR}$.
We will consider a point $(V,\nabla )\in G_e$ and deform it to a family $(V_t,\nabla _t)\in G_{e-1}$ 
for $t\neq 0$, with limit $(V,\nabla )$ as $t\rightarrow 0$. 

Having $(V,\nabla , F^{\cdot})\in G_e$ means that there is an exact sequence
\begin{equation}
\label{Ees}
0\rightarrow E^1\rightarrow V \rightarrow E^0 \rightarrow 0
\end{equation}
with $E^1= F^1$ and $E^0 = F^0/F^1$. 
The limit point in $P_e$  is $E=E^1\oplus E^0$ with $\theta := \nabla : E^1\rightarrow E^0\otimes \Omega ^1_X$.
Choose a point $p\in X$ and let $L:= E^1(-p)$. Let $\varphi : L\rightarrow V$ be the inclusion. Consider the functor
of deformations of $(V,\nabla , L, \varphi )$. It is controlled by the hypercohomology of the complex 
$$
C^{\cdot}:= End(V)\oplus Hom(L,L) \rightarrow End(V)\otimes \Omega ^1_X \oplus Hom (L,V)
$$
where the differential is the matrix
$$
\left(
\begin{array}{cc}
\nabla & 0 \\
-\circ \varphi & \varphi \circ -
\end{array}
\right) . 
$$
There is a long exact sequence
$$
\ldots \rightarrow \hhh ^i(Hom(L,L)\rightarrow Hom(L,V)) \rightarrow \hhh ^i(C^{\cdot}) \rightarrow 
\hhh ^i(End(V)\otimes \Omega ^{\cdot}_X)\rightarrow \ldots .
$$

The fact that our filtration is $gr$-stable implies that the spectral sequence
associated to the filtered complex $(End(V)\otimes \Omega ^{\cdot}_X, F^{\cdot})$ degenerates at 
$E^{\cdot , \cdot} _1$. The VHS on $End(Gr_F(V))$ has Hodge types $(1,-1) + (0,0) + (-1,1)$
so its $\hhh ^1$ has types $(2,-1)+ \ldots + (-1,2)$. The associated-graded of  $\hhh ^1(End(V)\otimes \Omega ^{\cdot}_X)$ has pieces
$$
Gr^2_F\hhh ^1 = H^0(Hom (E^0, E^1)\otimes \Omega ^1_X), 
$$
$$
Gr^1_F\hhh ^1 =\hhh ^1 (Hom (E^0,E^1) \rightarrow ( Hom (E^0,E^0)\oplus Hom (E^1, E^1))\otimes \Omega ^1_X), 
$$
$$
Gr^0_F\hhh ^1 =\hhh ^1 (Hom (E^1,E^1)\oplus Hom (E^0, E^0) \rightarrow Hom (E^1,E^0)\otimes \Omega ^1_X), 
$$
$$
Gr^{-1}_F\hhh ^1 = H^1(Hom (E^1, E^0)).
$$
The degeneration gives a surjection $\hhh ^1 \rightarrow \!\!\!\!\! \rightarrow Gr^{-1}_F\hhh ^1$, 
in other words
$$
\hhh ^i(End(V)\otimes \Omega ^{\cdot}_X) \rightarrow H^1(Hom (E^1, E^0))\rightarrow 0.
$$
The definition $L= E^1(-p)$ gives an exact sequence of the form 
\begin{equation}
\label{ptseq}
0\rightarrow Hom (E^1,E^0) \rightarrow Hom (L, E^0) \rightarrow \cc _p \rightarrow 0.
\end{equation}
Note that $deg (L)>0$ but $deg(E^0)<0$ so $H^0(Hom (L, E^0))=0$, 
hence the long exact sequence for \eqref{ptseq} becomes
$$
0\rightarrow \cc \rightarrow H^1(Hom (E^1, E^0)) \rightarrow H^1(Hom (L, E^0)) \rightarrow 0 .
$$
Putting these two together we conclude that the map 
$$
\hhh ^1(End(V)\otimes \Omega ^{\cdot}_X) \rightarrow H^1(Hom (L, E^0))
$$ 
is surjective, but there is an element in its kernel which maps to something nonzero in $H^1(Hom (E^1, E^0))$. 
Another exact sequence like \eqref{ptseq} gives a surjection 
$$
H^1(\Oo _X)= H^1(Hom (L,L)) \rightarrow H^1(Hom (L, E^1)) \rightarrow 0.
$$
The long exact sequence for $Hom$ from $L$ to the short exact sequence \eqref{Ees} gives
$$
H^1(Hom (L,E^1))\rightarrow H^1(Hom (L,V))\rightarrow H^1(Hom (L,E^0))\rightarrow 0 .
$$
Therefore the map 
\begin{equation}
\label{themap}
H^1(Hom (L,L))\oplus \hhh ^1(End(V)\otimes \Omega ^{\cdot}_X) \rightarrow H^1(Hom (L,V))
\end{equation}
is surjective but not an isomorphism. This gives vanishing of 
the obstruction theory for deforming $(V,\nabla , L, \varphi )$. 
And the map \eqref{themap} has an element in its kernel which maps to something
nonzero in $H^1(Hom (E^1, E^0))$. Hence there is a deformation of $(V,\nabla , L, \varphi )$ which doesn't extend to
a deformation of $E^1\subset V$. A one-parameter family of $(V_t, \nabla _t, L_t, \varphi _t)$ thus gives the deformation 
from $(V,\nabla )$ into the next lowest stratum $G_{e-1}$ where $L_t$ of degree $e-1$ will be the destabilizing subsheaf.
\end{proof}

\section{Principal objects}
\label{principal}

Ramanan would want us to consider also principal bundles for arbitrary reductive structure group $G$.
Notice that the category of partial oper structures, while posessing tensor product and dual operations,
is not tannakian because morphisms of filtered objects need not be strict. The passage to principal
objects should be done by hand.
One can construct the various moduli spaces 
of principal bundles with connection, principal Higgs bundles, and the
nonabelian Hodge moduli space of principal bundles with $\lambda$-connection 
$$
\begin{array}{ccccc}
M_{\rm H}(X,G) & \subset & M_{\rm Hod}(X,G) & \supset & M_{\rm DR}(X,G) \\
\downarrow & & \downarrow & & \downarrow \\
\{ 0\} &\subset &  \aaa ^1 & \supset & \{ 1 \} 
\end{array}.
$$
In this section we indicate how to prove that
the limit $\lim _{t\rightarrow 0}t\cdot p$ exists for any $p\in M_{\rm Hod}(X,G)$. The discussion using
harmonic bundles will be more technical than the previous sections of the paper. 

Embedding $G\hookrightarrow GL(r)$,
it suffices to show that the map of moduli spaces $M_{\rm Hod}(X,G)\rightarrow M_{\rm Hod}(X,GL(r))$ is finite
and then apply Lemma \ref{limitlem}. Such a finiteness statement was considered for $M_{\rm H}$ and $M_{\rm DR}$
in \cite{moduli} along with the construction of the homeomorphism between these two spaces.
The same statements were mentionned for $M_{\rm Hod}$ in \cite{hfnac}, however the discussion there was
inadequate. The main issue is to prove that the map from the moduli space of framed harmonic bundles to
the GIT moduli space $M_{\rm Hod}(X,G)$ is proper. The distinct arguments at $\lambda = 0$ and $\lambda = 1$
given in \cite{moduli} don't immediately generalize to intermediate values of $\lambda$. 
This is somewhat similar to---and inspired by---the convergence questions treated recently by Mochizuki in \cite{MochizukiLambda}. 
We give an argument based on the topology
of the moduli space together with its Hitchin map; it would be interesting to have a more direct argument with explicit estimates. 

Fix a basepoint $x\in X$, and let
$R_{\rm Hod}(X,x,G)$ be the parameter variety for $(\lambda , P,\nabla , \zeta )$ where $\lambda \in \aaa ^1$, 
$P$ is a principal $G$-bundle with $\lambda$-connection $\nabla$ (such that $(P,\nabla )$ is semistable with vanishing Chern classes)
and $\zeta : G\cong P_x$ is a frame for $P$ at the point $x$. The group $G$ acts and 
$$
R_{\rm Hod}(X,x,G)\rightarrow M_{\rm Hod}(X,G)
$$
is a universal categorical quotient. 
Indeed, $R_{\rm Hod}(X,x,G)$ is constructed first as a closed subscheme of $R_{\rm Hod}(X,x,GL(r))$ for a 
closed embedding $G\subset GL(r)$ by the same construction as in \cite[II, \S 9]{moduli}, which treated the Higgs case.
This was based on  \cite[I,\S 4]{moduli} which treats the general $\Lambda$-module case so the discussion transposes to
$M_{\rm Hod}$.
One can choose a $GL(r)$-linearized line bundle on $R_{\rm Hod}(X,x,GL(r))$ for which every point is semistable,
which by restriction gives a $G$-linearized line bundle on $R_{\rm Hod}(X,x,G)$ for which every point is semistable.
Mumford's theory then gives the universal categorical quotient.

Fix a compact real form $J\subset G$. For any $\lambda \in \aaa ^1$ a {\em harmonic metric} on 
a principal bundle with $\lambda$-connection $(P,\nabla )$ is a $\Cc ^{\infty}$ reduction of structure group $h\subset P$
from $G$ to $J$ satisfying some equations (see \cite{MochizukiLambda} for example) 
which interpolate between the Yang-Mills-Higgs equations at $\lambda = 0$ and the 
harmonic map equations at $\lambda = 1$. Let $R^J_{\rm Hod}(X,x,G) \subset R_{\rm Hod}(X,x,G)$ denote the subset of 
$(\lambda , P,\nabla , \zeta )$ such that there exists a harmonic metric $h$ compatible with the frame $\zeta$ at $x$,
in other words $\zeta : J\rightarrow h_x$. This condition
fixes $h$ uniquely. 

\begin{lemma}
The map 
$R^J_{\rm Hod}(X,x,G)\rightarrow M_{\rm Hod}(X,G)$
is proper, and induces homeomorphisms 
$$
M_{\rm H}(X,G)\times \aaa ^1\cong R^J_{\rm Hod}(X,x,G)/J\cong  M_{\rm Hod}(X,G) .
$$
\end{lemma}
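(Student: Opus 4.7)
The proposed proof combines three ingredients: existence and uniqueness of harmonic metrics on polystable $\lambda$-connections (due to Mochizuki), a preferred-section trivialization over $\aaa^1$, and a properness argument via the Hitchin map. First I would invoke the existence theorem of Mochizuki \cite{MochizukiLambda}: for each polystable principal $G$-bundle with $\lambda$-connection $(\lambda, P, \nabla)$ having vanishing Chern classes, there exists a harmonic reduction $h$ to $J$ satisfying the Yang--Mills--Higgs--Corlette equation that interpolates between $\lambda = 0$ and $\lambda = 1$, and $h$ is unique up to the automorphism group of $(P,\nabla)$. Requiring the frame $\zeta$ to satisfy $\zeta(J)=h_x$ rigidifies $h$ down to the residual $J$-action at $x$. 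This shows that the map $R^J_{\rm Hod}(X,x,G)/J \to M_{\rm Hod}(X,G)$ is a continuous bijection: two framed harmonic bundles with the same image in $M_{\rm Hod}$ are $S$-equivalent, and by uniqueness of harmonic reductions on the polystable pieces, together with the framing condition, they must lie in a common $J$-orbit.

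Next I would construct the first homeomorphism $M_{\rm H}(X,G) \times \aaa^1 \cong R^J_{\rm Hod}(X,x,G)/J$ via the Deligne preferred-section construction. A framed harmonic Higgs bundle $(P,\theta,h,\zeta) \in R^J_{\rm H}(X,x,G)$ produces, for every $\lambda \in \cc$, a framed harmonic $\lambda$-connection with the same underlying $J$-reduction $h$ by taking the $\lambda$-dependent combination of the Chern connection and the Higgs field (the ``twistor family''). This yields a continuous $\Gm$-equivariant map $R^J_{\rm H}(X,x,G)/J \times \aaa^1 \to R^J_{\rm Hod}(X,x,G)/J$ whose inverse reads off the Higgs structure induced on $(P,h)$ from the harmonic decomposition of $\nabla$. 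Composing with the homeomorphism $R^J_{\rm H}(X,x,G)/J \cong M_{\rm H}(X,G)$ already proven in \cite{moduli} gives the first claim.

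For the second homeomorphism one must upgrade the continuous bijection $R^J_{\rm Hod}/J \to M_{\rm Hod}$ to a homeomorphism, which amounts to showing $R^J_{\rm Hod}(X,x,G) \to M_{\rm Hod}(X,G)$ is proper. The plan is to combine the Hitchin-type characteristic-polynomial morphism $M_{\rm Hod}(X,G) \to \aaa^1 \times H_G$, obtained by evaluating the $G$-invariant polynomials on $\nabla$ (which is $\lambda$-homogeneous and specializes to Hitchin's map at $\lambda = 0$), with Mochizuki's a priori estimates on harmonic metrics. Given a sequence $(\lambda_n, P_n, \nabla_n, h_n, \zeta_n)$ whose images converge in $M_{\rm Hod}$, boundedness of $\lambda_n$ and of the characteristic polynomials gives uniform bounds on the $h_n$-norms of the curvature and Higgs parts of $\nabla_n$; a Bochner-type argument applied to $\log \mathrm{tr}(h_n h_0^{-1})$ provides uniform $C^0$-control with respect to a fixed background reduction $h_0$, and elliptic bootstrapping in the $\lambda$-dependent harmonic equation extracts a $C^\infty_{\mathrm{loc}}$-convergent subsequence modulo $J$.

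The main obstacle will be carrying out these estimates \emph{uniformly in} $\lambda$ as $\lambda_n$ ranges over a compact subset of $\aaa^1$. At $\lambda = 0$ and $\lambda = 1$ the estimates are classical (Donaldson--Uhlenbeck--Yau--Hitchin and Corlette respectively), but the interpolating operator changes character, and the distinct arguments in \cite{moduli} for the two endpoints cannot be transplanted to intermediate $\lambda$. The strategy is the one suggested by the author in connection with \cite{MochizukiLambda}: prove a $\lambda$-uniform Donaldson-type functional inequality on $\{|\lambda| \le R\}$, combine it with the Hitchin bound, and deduce the required compactness, hence properness.
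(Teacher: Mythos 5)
Your first two paragraphs are essentially fine and match the paper: the product structure $R^J_{\rm Hod}(X,x,G)\cong Har^J(X,G)\times \aaa^1$ coming from the preferred-section (twistor) family, the identification $Har^J(X,G)/J\cong M_{\rm H}(X,G)$ from \cite{moduli}, and the observation that both homeomorphisms reduce to properness of $R^J_{\rm Hod}(X,x,G)\rightarrow M_{\rm Hod}(X,G)$ (together with the reduction from $G$ to $GL(r)$ via the closed embedding of representation schemes) are exactly the structure of the paper's argument.

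The gap is in the properness step, which is the heart of the lemma, and your proposal leaves it as a plan rather than a proof. Two concrete problems. First, the ``Hitchin-type characteristic-polynomial morphism'' on $M_{\rm Hod}$ obtained by ``evaluating the $G$-invariant polynomials on $\nabla$'' does not exist as an algebraic (or even obviously continuous) map for $\lambda\neq 0$: a $\lambda$-connection is a differential operator, not an $\Oo_X$-linear endomorphism-valued form, and the absence of an algebraic Hitchin map away from $\lambda=0$ is precisely why existence of limits is nontrivial here. In the paper the map $\Psi$ is only defined set-theoretically, via the Higgs bundle in the preferred section attached to a harmonic metric, and establishing its local boundedness/continuity is the whole difficulty; assuming bounded characteristic polynomials in order to bound the harmonic metrics is therefore circular. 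Second, your closing strategy --- a ``$\lambda$-uniform Donaldson-type functional inequality'' with Bochner estimates and bootstrapping uniform over $|\lambda|\le R$ --- is exactly the ``more direct argument with explicit estimates'' that the paper states it does not have and deliberately avoids, because the distinct endpoint arguments of \cite{moduli} at $\lambda=0$ (algebraicity of the Hitchin map) and $\lambda=1$ (energy minimization) do not transplant to intermediate $\lambda$. The paper's actual mechanism is topological: properness over the stable locus gives continuity of $\Psi$ along $\Gm$-orbits $t\mapsto \Psi(t\lambda,V,t\nabla)$; one then rescales a putatively unbounded sequence to $\lambda=1$ where the energy bound applies, chooses a path in $\cc^\ast$ and an intermediate-value level $|\Psi|=C$, extracts a convergent subsequence of harmonic bundles at that level, and uses separatedness (Hausdorffness) of $M_{\rm Hod}$, plus connectedness/normality of the fibers and a preferred section for the global statement, to reach a contradiction. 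Without either carrying out the uniform estimates or supplying such a topological argument, the properness claim --- and hence both homeomorphisms --- remains unproven in your proposal.
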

\begin{proof}
The moduli space of harmonic $\lambda$-connections  has a natural product structure 
$R^J_{\rm Hod}(X,x,G) \cong Har^J(X,G)\times \aaa ^1$ where $Har^J(X,G)$ is the space of framed harmonic $G$-bundles;
and the topological quotient by the action of $J$ on the framing is again a product $(Har^J(X,G)/J)\times \aaa ^1$. 
The second homeomorphism follows from the properness statement, by the discussion in \cite{moduli}. There 
the properness was proven at $\lambda = 0,1$. At $\lambda = 0$ we get $Har^J(X,G)/J\cong M_{\rm H}(X,G)$, which gives the
first homeomorphism. At $\lambda = 1$ we get $Har^J(X,G)/J\cong M_{\rm DR}(X,G)$. 

Fix $G\subset GL(r)$ such that $J\subset U(r)$, then  
$R^J_{\rm Hod}(X,x,G)$ is a closed subset of $R^{U(r)}_{\rm Hod}(X,x,GL(r))$. 
Using this, one can show that the lemma for $GL(r)$ implies the lemma for $G$. 

Suppose now $G=GL(r)$, with compact subgroup $J=U(r)$. The map on the subset of stable points
$R^J_{\rm Hod}(X,x,r)^s \rightarrow M_{\rm Hod}(X,r)^s$
is proper, using the fact that $M_{\rm Hod}(X,r)^s$ is a fine  moduli space,
plus the main estimate for the construction of Hermitian-Einstein harmonic metric solutions
by the method of Donaldson's functional. This estimate is explained for the case of $\lambda$-connections
in \cite{MochizukiLambda}. 

Given a polystable point $(\lambda , V, \nabla )\in M_{\rm Hod}(X,r)$ we can associate a polystable Higgs bundle $(E,\theta )$
in the preferred section corresponding to the harmonic bundle associated to any harmonic metric on $(V, \nabla )$.
The $(E,\theta )$ is unique up to isomorphism, in particular the value of the Hitchin map $\Psi = \det (\theta - t) \in \cc ^N$ is well-defined.
This gives a set-theoretically defined map $\Psi : M_{\rm Hod}(X,r)\rightarrow \cc ^N$. For a sequence of points $\rho _i\in R^J_{\rm Hod}(X,x,r)$,
there is a convergent subsequence if and only if the sequence $\Psi [\rho _i]$ contains a bounded subsequence. Hence, in order to prove
properness of the map $R^J_{\rm Hod}(X,x,r) \rightarrow M_{\rm Hod}(X,r)$ it suffices to prove that the function $\Psi$ is locally bounded on
$M_{\rm Hod}(X,r)$. This is obvious on the fiber $\lambda = 0$ where the Hitchin map $\Psi$ is an algebraic map. On the fiber
$\lambda = 1$, an argument using the characterization of harmonic metrics as ones which minimize the energy $\| \theta \| ^2 _{L^2}$
again shows that $\Psi$ is locally bounded. In particular, the lemma holds over $\lambda = 0$ and $\lambda = 1$, indeed this was the proof
of \cite{moduli} for the homeomorphism $M_{\rm H}(X,r)\cong M_{\rm DR}(X,r)$.

Properness over the set of stable points means that the map $M_{\rm Hod}(X,r)^s\rightarrow M_{\rm H}(X,r)^s$ is continuous,
hence $\Psi$ is continuous over $M_{\rm Hod}(X,r)^s$. Similarly, it is continuous on any stratum obtained by fixing the
type of the decomposition of a polystable object into isotypical components. A corollary is that for any point
$[(\lambda , V,\nabla )]\in M_{\rm Hod}(X,r)$, the function  
$\cc ^{\ast} \ni t \mapsto \Psi (t\lambda , V, t\nabla )$
is continuous.

Suppose we have a sequence of points $p_i\rightarrow q$ converging in $M_{\rm Hod}(X,r)$, but where $|\Psi (p_i)|\rightarrow \infty$.
Assume that they are all in the same fiber $M_{\lambda}$ over a fixed value $0\neq \lambda \in \aaa ^1$. 
The points $\lambda ^{-1}p_i$ converge to $\lambda ^{-1}q$ in $M_{\rm DR}(X,r)$, so (by the energy argument referred to above) 
we have a bound $|\Psi (\lambda  ^{-1}p_i)|\leq C_1$. 
Fix a curve segment $\gamma \subset \cc$ joining $\lambda ^{-1}$ to $1$ but  not passing through $0$. 
The function $t\mapsto \Psi (tq)$ is continuous by the previous paragraph, so there is a bound $|\Psi (tq)|\leq C_2$ for $t\in \gamma$. 
On the other hand, again by the continuity of the previous paragraph, for any  constant $C > C_1$ there exists a sequence of points $t_i\in \gamma$ 
such that $| \Psi (t_ip_i) | = C$. Possibly going to a subsequence, we  can assume that $t_ip_i\rightarrow q'$ as a limit of harmonic bundles.
Continuity of the Hitchin map on $M_{\rm H}$ says that $| \Psi (q') | = C$. The map from the space of harmonic bundles to $M_{\rm Hod}$ is
continuous so the limit $t_ip_i\rightarrow q'$ also holds in $M_{\rm Hod}$. On the other hand, we can assume $t_i\rightarrow t$ in $\gamma$
(again possibly after going to a subsequence), which gives $t_ip_i\rightarrow tq$ in $M_{\rm Hod}$. Separatedness of the scheme 
$M_{\rm Hod}$ implies that the topological space is Hausdorff, so $tq=q'$. If $C> C_2$ this contradicts the bound $|\Psi (tq)|\leq C_2$ for $t\in \gamma$.
We obtain a contradiction to the assumption $|\Psi (p_i)|\rightarrow \infty$, so we have proven that $|\Psi (p_i)|$ is locally bounded. 

In the fiber over each fixed $\lambda \in \aaa ^1$,
this shows properness, hence the homeomorphism statements, hence that $\Psi$ is continuous. 
Now using the connectedness and separatedness properties of $M_{\rm Hod}$, an argument similar to that of the previous paragraph will allow us to show
boundedness of $\Psi$ globally over $M_{\rm Hod}$ without restricting to a single fiber. 
Suppose $p_i\rightarrow q$ in $M_{\rm Hod}$
over a convergent sequence $\lambda _i\rightarrow \lambda \in \aaa ^1$, but with $|\Psi (p_i)|\rightarrow \infty$. 
Fix a preferred section $\sigma : \aaa ^1\rightarrow M_{\rm Hod}$ and we may assume that $\sigma (\lambda _i)$ is connected to $p_i$ by
a path $\gamma _i : [0,1]\rightarrow M_{\lambda _i}$. If necessary replacing $X$ by a sufficiently high genus covering, we can
view $M_{\rm Hod}$ as a family of connected normal varieties. Thus  
we can assume that the paths $\gamma _i$ converge to a path $\gamma$ connecting
$q$ to $\sigma (\lambda )$ in the fiber $M_{\lambda}$.
Fix a constant $C > {\rm sup}_t |\Psi (\gamma (t))|$, in particular also $C>|\Psi (\sigma (\lambda _i))|$, indeed
the $\Psi (\sigma (\lambda _i))$ are all the same because $\sigma$ was a preferred section.  
For large values of $i$ we have 
$$
|\Psi (\sigma (\lambda _i))|=|\Psi (\gamma _i(0))| < C < |\Psi (\gamma _i(1))| = |\Psi (p_i)|.
$$
Continuity of $\Psi$ in each fiber $M_{\lambda _i}$ shows that there are $t_i\in [0,1]$ with 
$|\Psi (\gamma _i(t_i))| = C$. Going to a subsequence we get convergence of the harmonic bundles
associated to the points $\gamma _i(t_i)$, keeping the same norm of the Hitchin map. Thus $\gamma _i(t_i)\rightarrow q'$ with
$|\Psi (q')| = C$. For a further subsequence, $t_i\rightarrow t$
and as in the previous argument, separatedness of the moduli space implies that $q' = \gamma (t)$, contradicting the choice of $C$.  
This proves that $\Psi$ is locally bounded, which in turn
implies properness of the map in the first statement of the lemma for $GL(r)$, to complete the proof.
\end{proof}

Suppose now given an injective group homomorphism between reductive groups $G\hookrightarrow H$. 
Choose compact real forms $J\subset G$ and $K\subset H$
such that the homomorphism is compatible: $J\rightarrow H$. This gives a diagram
$$
\begin{array}{ccc}
R^J_{\rm Hod}(X,x,G)&\rightarrow &R^K_{\rm Hod}(X,x,H)\\
\downarrow & & \downarrow \\
M_{\rm Hod}(X,G)&\rightarrow &M_{\rm Hod}(X,H)
\end{array}
$$
where the vertical maps are proper by the previous lemma. The upper  horizontal map is a closed embedding, indeed
we can choose $H\hookrightarrow GL(r)$ which also induces $G\hookrightarrow GL(r)$, and 
the schemes $R_{\rm Hod}(X,x,G)$ and $R_{\rm Hod}(X,x,H)$ are by construction closed subschemes
of $R_{\rm Hod}(X,x,GL(r))$ \cite{moduli}. The subsets of framings compatible with the harmonic metrics
are closed, so the upper horizontal map is an inclusion compatible with closed embeddings into
$R_{\rm Hod}(X,x,GL(r))$, hence it is a closed embedding. In particular, it is proper. This implies that
the bottom map is proper. 

\begin{corollary}
\label{finiteness}
Given a group homomorphism with finite kernel between reductive groups $G\rightarrow H$ the resulting map on moduli spaces
$$
M_{\rm Hod}(X,G)\rightarrow M_{\rm Hod}(X,H)
$$
is finite. 
\end{corollary}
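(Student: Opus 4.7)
My plan is to upgrade the properness of $M_{\rm Hod}(X,G)\to M_{\rm Hod}(X,H)$, which has essentially already been established by the preceding lemma and the diagram immediately above the statement, to finiteness by separately verifying that every fiber is a finite set. Since both moduli spaces are schemes of finite type over $\cc$ by their GIT construction, a proper quasi-finite morphism is automatically finite, and this will close the argument.

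Properness requires only minor unpacking of what is stated in the text: the closed embedding $R^J_{\rm Hod}(X,x,G)\hookrightarrow R^K_{\rm Hod}(X,x,H)$ is proper, the two vertical maps in the displayed diagram are proper by the lemma, and the square commutes, so the induced map of GIT quotients is proper. For quasi-finiteness I would exploit the $\Gm$-equivariance of everything in sight. The action $t\cdot (\lambda ,P,\nabla )=(t\lambda ,P,t\nabla )$ on $M_{\rm Hod}$ commutes with the extension-of-structure-group functor, and rescales $\lambda$ by the standard action of $\Gm$ on $\aaa ^1$. Hence for any $t\in \cc ^{\ast}$ the rescaling gives a scheme isomorphism of the fiber of $M_{\rm Hod}(X,G)\to \aaa ^1$ over $1$ onto that over $t$, intertwining the maps down to $M_{\rm Hod}(X,H)$ on either side. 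Quasi-finiteness over the whole open set $\lambda \neq 0$ therefore collapses to a single case $\lambda =1$, namely finiteness of $M_{\rm DR}(X,G)\to M_{\rm DR}(X,H)$, proven in \cite{moduli}. The remaining fiber, over $\lambda =0$, is the map $M_{\rm H}(X,G)\to M_{\rm H}(X,H)$, again finite by \cite{moduli}.

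The one step I expect to be slightly delicate is checking that the $\Gm$-rescaling really is an algebraic isomorphism of fibers intertwining the group-change map, rather than merely a bijection on points or a topological identification. This should follow formally from the $\Lambda$-module description of $\lambda$-connections used in the GIT construction of \cite{moduli}, since the rescaling corresponds to an automorphism of the sheaf of rings $\Lambda$ that is manifestly compatible with the embedding of the representation scheme for $G$ into the representation scheme for $H$; but one has to match the two constructions carefully. Once this is in place, combining properness with quasi-finiteness yields finiteness, and the corollary is proven.
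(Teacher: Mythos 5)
Your core argument is the same as the paper's: properness of $M_{\rm Hod}(X,G)\rightarrow M_{\rm Hod}(X,H)$ descends from the commutative square with the proper vertical maps of the lemma and the closed embedding $R^J_{\rm Hod}(X,x,G)\hookrightarrow R^K_{\rm Hod}(X,x,H)$, and then proper plus quasi-finite gives finite. Where the paper simply cites \cite{moduli} for quasi-finiteness, you unpack it by $\Gm$-equivariance: since the group-change map preserves $\lambda$ and commutes with the rescaling action, the fibers over $\lambda\neq 0$ are algebraically identified with fibers of $M_{\rm DR}(X,G)\rightarrow M_{\rm DR}(X,H)$, and the fiber over $\lambda =0$ is $M_{\rm H}(X,G)\rightarrow M_{\rm H}(X,H)$, both finite by \cite{moduli}. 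That reduction is sound (the action is algebraic, so your worry about it being only a set-theoretic bijection is harmless), and it is a legitimate, slightly more explicit version of the paper's citation. One small point you leave tacit, as does the paper: to descend properness from $R^J_{\rm Hod}(X,x,G)\rightarrow M_{\rm Hod}(X,H)$ to the quotient map you also use surjectivity of $R^J_{\rm Hod}(X,x,G)\rightarrow M_{\rm Hod}(X,G)$, i.e.\ that every point of the GIT quotient comes from a polystable object carrying a harmonic metric.

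There is, however, a genuine gap relative to the statement: you only prove the corollary for an injective homomorphism $G\hookrightarrow H$, whereas the statement allows an arbitrary homomorphism with finite kernel. Your whole setup (compatible compact forms $J\subset G$, $K\subset H$, and the closed embedding of framed-harmonic schemes) requires injectivity, and nothing in your proposal addresses the non-injective case. The paper handles it by a further step: ``the same argument as in \cite{moduli}'' reduces the finite-kernel case to the injective one, essentially by factoring $G\rightarrow H$ through its image $G/N$ (a closed reductive subgroup of $H$) and showing separately that $M_{\rm Hod}(X,G)\rightarrow M_{\rm Hod}(X,G/N)$ is finite for a finite normal subgroup $N$. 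That last finiteness is not formal --- it is the content of the argument in \cite{moduli} being invoked --- so your proof as written establishes the corollary only for embeddings and needs this additional reduction to cover the stated generality.
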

\begin{proof}
If $G\rightarrow H$ is injective, the above argument shows that the map on moduli spaces is proper.
It is quasi-finite \cite{moduli} so it is finite. Then the same argument as in \cite{moduli} 
yields the same statement in the case of a group homomorphism with finite kernel between reductive groups. 
\end{proof}

\begin{corollary}
Suppose $G$ is a reductive group. Then for any  point $p\in M_{\rm Hod}(X,G)$ the limit point
$\lim _{t\rightarrow 0}t\cdot p$ exists and is unique in the fixed point set $M_{\rm H}(X,G)^{\Gm}$.
Hence we get a stratification of $M_{\rm DR}(X,G)$ just as in Proposition \ref{strata}.
\end{corollary}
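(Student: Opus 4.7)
The plan is to reduce the principal-$G$ case to the $GL(r)$ case already handled in Lemma \ref{limitlem}, by means of the finiteness supplied by Corollary \ref{finiteness}. Choose a faithful representation $G\hookrightarrow GL(r)$, which exists since $G$ is reductive. The induced map $f:M_{\rm Hod}(X,G)\rightarrow M_{\rm Hod}(X,GL(r))$ is finite by Corollary \ref{finiteness}, hence in particular proper and separated, and it is $\Gm$-equivariant because the action $t\cdot (\lambda ,P,\nabla ) = (t\lambda ,P,t\nabla )$ is functorial in extension of structure group.

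Given $p\in M_{\rm Hod}(X,G)$, let $\sigma :\Gm \rightarrow M_{\rm Hod}(X,G)$ be the orbit morphism $t\mapsto t\cdot p$. By Lemma \ref{limitlem} applied to $f(p)$, the composition $f\circ \sigma$ extends uniquely to a morphism $\aaa ^1 \rightarrow M_{\rm Hod}(X,GL(r))$ sending $0$ into $M_{\rm H}(X,GL(r))^{\Gm}$. The next step is to apply the valuative criterion of properness to $f$, viewed over the local ring at $0\in \aaa ^1$, in order to lift this extension to a unique morphism $\bar{\sigma} :\aaa ^1 \rightarrow M_{\rm Hod}(X,G)$ prolonging $\sigma$. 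Set $\lim _{t\rightarrow 0}t\cdot p := \bar{\sigma}(0)$; separatedness of $M_{\rm Hod}(X,G)$ gives uniqueness of this limit.

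It then remains to see that $\bar{\sigma}(0)$ actually lies in the $\Gm$-fixed locus, and not merely in the fiber over $\lambda = 0$. For this I would apply the uniqueness of the valuative lift to the two candidate extensions $t\mapsto \bar{\sigma}(ut)$ and $t\mapsto u\cdot \bar{\sigma}(t)$ for each $u\in \Gm$: both agree with $u\cdot \sigma$ on $\Gm$ and have the same composite with $f$, since the target extension is already known to be $\Gm$-equivariant. They must therefore coincide, which forces $\bar{\sigma}$ to be $\Gm$-equivariant and hence $\bar{\sigma}(0)$ to be a fixed point. With existence and uniqueness of limits in hand, the stratification $G_\alpha \subset M_{\rm DR}(X,G)$ indexed by the connected components $P_\alpha$ of $M_{\rm H}(X,G)^{\Gm}$ is then constructed exactly as in Proposition \ref{strata}, via the Bialynicki-Birula argument applied to a $\Gm$-linearized projective embedding, which is available because $M_{\rm Hod}(X,G)$ is a GIT quotient of $R_{\rm Hod}(X,x,G)$ with a $\Gm$-linearized very ample line bundle pulled back from the $GL(r)$ construction. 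The principal obstacle in this plan is the equivariance verification in the last step; everything else is immediate from Corollary \ref{finiteness} and Lemma \ref{limitlem}.
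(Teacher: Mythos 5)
Your proposal is correct and follows essentially the same route as the paper: embed $G\hookrightarrow GL(r)$, use the finiteness of Corollary \ref{finiteness} (hence properness) together with Lemma \ref{limitlem} to produce the limit, get uniqueness from separatedness of $M_{\rm Hod}(X,G)$, and define the stratification as in Proposition \ref{strata}. The paper states this very briefly; your valuative-criterion lifting and the equivariance check showing $\bar{\sigma}(0)$ is a fixed point are just the standard details the paper leaves implicit.
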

\begin{proof}
Choose $G\hookrightarrow GL(r)$; apply Lemma \ref{limitlem} together with the finiteness of Corollary \ref{finiteness}
to get existence of the limit. Uniqueness follows from separatedness of the moduli space \cite{moduli}.
The stratification is defined in the same way as in \ref{strata}.
\end{proof}

The above arguments prove that the limit  points exist, however it would be good to have a geometric
construction analogous to what we did in \S \ref{gtfilt}, \S \ref{constr}. This should involve 
a principal-bundle approach to the instability flag \cite{RamananRamanathan}.

\begin{question}
How to give an explicit description of the limiting points in terms of Griffiths-transverse parabolic reductions
in the case of principal $G$-bundles?
\end{question}

We obtain the {\em oper stratification} of $M_{\rm DR}(X,G)$ just as in Proposition \ref{strata}. 
The smallest stratum consisting of $G$-opers is treated in much detail in \cite{BDopers}.

It would be good to generalize the other elements of our discussion \S\S \ref{sec-os},  \ref{sec-parabolic},  \ref{deformations} to the principal bundle case too. 
The theory of parabolic structures would hit the same complications mentionned by Seshadri in
the present conference. 
The theory of deformations should follow \cite{BiswasRamanan}.

\bibliographystyle{amsplain}

\end{document}